\newcommand{\B}{\mathbb{B}}
\newcommand{\G}{\mathbb{G}}
\newcommand{\M}{\mathbb{M}}
\newcommand{\N}{\mathbb{N}}
\newcommand{\R}{\mathbb{R}}
\newcommand{\T}{\mathbb{T}}
\newcommand{\V}{\mathbb{V}}
\newcommand{\W}{\mathbb{W}}
\newcommand{\cC}{\mathcal{C}}
\newcommand{\cF}{\mathcal{F}}
\newcommand{\cH}{\mathcal{H}}
\newcommand{\cL}{\mathcal{L}}
\newcommand{\cS}{\mathcal{S}}
\newcommand{\ph}{\varphi}
\newcommand{\sm}{\setminus}
\newcommand{\diam}{\mbox{\rm diam}}
\newcommand{\Lie}{\mathrm{Lie}}
\newcommand{\q}{\mathrm q}
\newcommand{\p}{\mathrm p}
\newcommand{\n}{\mathrm n}
\newcommand{\NN}{\mathrm N}
\newcommand{\res}{\mbox{\LARGE{$\llcorner$}}}
\newcommand{\beqas}{\begin{eqnarray*}}
\newcommand{\eeqas}{\end{eqnarray*}}
\newcommand{\beqa}{\begin{eqnarray}}
\newcommand{\eeqa}{\end{eqnarray}}
\newcommand{\beq}{\begin{equation}}
\newcommand{\eeq}{\end{equation}}
\newcommand{\bce}{\begin{center}}
\newcommand{\ece}{\end{center}}
\newcommand{\set}[1]{\left\{ #1 \right\}}            
\newtheorem{The}{Theorem}[section]
\newtheorem{Lem}[The]{Lemma}
\newtheorem{Def}[The]{Definition}
\newtheorem{Pro}[The]{Proposition}
\newtheorem{Cor}[The]{Corollary}
\newtheorem{Exa}[The]{Example}
\newtheorem{Con}{Conjecure}
\newcommand{\bt}{\begin{The}}
\newcommand{\et}{\end{The}}
\newcommand{\bl}{\begin{Lem}}
\newcommand{\el}{\end{Lem}}
\newcommand{\bd}{\begin{Def}\rm}
\newcommand{\ed}{\end{Def}}
\newcommand{\br}{\begin{Rem}\rm}
\newcommand{\er}{\end{Rem}}
\newcommand{\bpr}{\begin{Pro}}
\newcommand{\epr}{\end{Pro}}
\newcommand{\bc}{\begin{Cor}}
\newcommand{\ec}{\end{Cor}}
\newcommand{\bj}{\begin{Con}}
\newcommand{\ej}{\end{Con}}
\newcommand{\bex}{\begin{Exa}}
\newcommand{\eex}{\end{Exa}}
\newcommand{\bV}{\mathbf{V}}
\newcommand{\bW}{\mathbf{W}}
\newcommand{\IL}{\mathcal{I}\hskip0.0001mm\mathcal{L}}
\newtheorem{teo}{Theorem}[section]
\newtheorem{cor}[teo]{Corollary}
\newtheorem{lem}[teo]{Lemma}
\theoremstyle{definition}
\newtheorem{deff}{Definition}[section]
\newtheorem{Remark}[teo]{Remark}
\begin{document}

\title{Symmetry results for the area formula in homogeneous groups}

\author{Francesca Corni}
\address{Francesca Corni: Dipartimento di Matematica\\ Universit\`a di Bologna\\ Piazza di Porta S.Donato 5\\ 40126, Bologna, Italy}
\email{francesca.corni3@unibo.it}
\author{Valentino Magnani}
\address{Valentino Magnani: Dipartimento di Matematica\\ Universit\`a di Pisa\\ Largo Bruno Pontecorvo 5\\ 56127, Pisa, Italy}
\email{valentino.magnani@unipi.it}

\begin{abstract}
We prove that if the shape of the metric unit ball in a homogeneous group enjoys a precise symmetry property, then the associated distance
yields the standard form of the area formula. The result applies to some classes of smooth and nonsmooth submanifolds. We finally prove the equality between spherical measure and centered Hausdorff measure, under two different geometric conditions on the shape of the metric unit ball.
\end{abstract}

\thanks{F.C. is partially supported by INDAM-GNAMPA-2024 project: {\it Free boundary problems to degenerate, nonlinear, local and nonlocal, operators in noncommutative structures}. V.M. is partially supported by the APRISE - {\em Analysis and Probability in Science} project, funded by the University of Pisa, grant PRA 2022 85, by PRIN 2022PJ9EFL {\em Geometric Measure Theory: Structure of Singular Measures, Regularity Theory and Applications in the Calculus of Variations}, funded by the European Union--NextGenerationEU", CUP:E53D23005860006, and the MIUR Excellence Department Project awarded to the Department of Mathematics, University of Pisa, CUP I57G22000700001.}

\subjclass[2020]{Primary 28A75; Secondary 53C17, 22E30}

\date{\today}

\keywords{symmetry, isometry, homogeneous group, homogeneous distance, Hausdorff measure, area formula, spherical factor}

\maketitle

\tableofcontents

\pagebreak

\section{Introduction}

The notion of area is a basic concept, that lies at the foundations of several branches of Mathematics. 
Computing the area of a submanifold is an elementary fact, until we wonder which subsets we are considering and which notion of area we are using. Clearly, the use of a specific notion of surface area also depends on the applications. These questions were deeply studied in the first half of the twentieth century, where different notions of $k$-dimensional surface area were proposed, and the lower semicontinuity was a foremost requirement.

Herbert Federer, among the most influential founders of Geometric Measure Theory, devoted his first works to the concept of surface area, especially the {\em Lebesgue area},  \cite{FedererPhD1944,FedererSAI,FedererSAII,Federer1952bams,Federer1955am}, and two important monographs systematically treated these questions,
\cite{Cesari1956-bk,Rado1948-bk}. Somehow one might conclude that in any ``geometric setting'' where the development of its related Geometric Measure Theory is studied, a suitable notion of surface area is the starting point.

In $n$-dimensional Euclidean space, any set with finite $k$-dimensional Hausdorff measure, with $1\le k<n$, can be decomposed into a disjoint union of a $k$-rectifiable set and a purely $k$-unrectifiable set. 
We have a natural relationship between Hausdorff measure and rectifiability, and indeed a general version of the area formula holds for rectifiable sets in arbitrary metric spaces, \cite{Kir94}. 

An interesting class of sets, called {\em intrinsic graphs}, \cite{FMS14, FranchiSerapioni2016IntrLip, SerraCassano2016,CorMag23pr}, plays the role in homogeneous groups as rectifiable sets, or smooth sets, do in Euclidean spaces. They precisely appear in relation to the theory of sets of finite perimeter in stratified groups, 
\cite{FSSC01,FSSC5,Marchi2014,AKLD2009TangSpGro,Mag31,DonLDMV2022}. 
Although intrinsic graphs enjoy some regularity with respect to the group structure, they may also share some features with ``fractal objects''. 
In fact, it is important to underline that regular intrinsic graphs might have Euclidean Hausdorff dimension that is strictly larger than their topological dimension, therefore they are extremely far from being rectifiable, \cite{KirSer04}. Nonetheless, very recently it has been shown that their spherical measure $\cS^\NN$ can be computed using a suitable notion of Jacobian $J\Phi$ for the (intrinsic) graph mapping $\Phi$, see \cite[Definition~7.1]{CorMag23pr}. 
We start by the following area formula for intrinsic graphs, corresponding to \cite[Theorem~1.2]{CorMag23pr}.

\begin{teo}[Area formula for intrinsic graphs]\label{teo:areaIntro}
	Let $\G$ be a homogeneous group and let $(\W,\V)$ be a couple of complementary subgroups of $\G$. Let $n$ and $\NN$ be the topological and the Hausdorff dimensions of $\W$, respectively. We consider an open set $A \subset \W$ and a mapping $\phi:A \to \V$. We also assume that $\phi$ is intrinsically differentiable at any point of $A$ and that $d\phi:A \to \IL(\W,\V)$ is  continuous.
	Setting $\Sigma=\Phi(A)$, where $\Phi(w)=w\phi(w)$ is the graph map of $\phi$, then for every Borel set $B \subset \Sigma $, we have
	\begin{equation}\label{intro:areageneral}
	\int_B \beta_{d}(\T_x) \ d \mathcal{S}^{\NN}(x)=\int_{\Phi^{-1}(B)} J\Phi(w)\  d \mathcal{H}_{|\cdot|}^{n}(w),
	\end{equation}
	where $\T_x$ is the tangent subgroup to $\Sigma$ at $x$. 
\end{teo}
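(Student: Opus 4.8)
The plan is to realise the right-hand side of \eqref{intro:areageneral} as a Radon measure on $\Sigma$, to compute its spherical Federer density at $\mathcal{S}^{\NN}$-almost every point of $\Sigma$, and to recognise this density as the spherical factor $\beta_{d}(\T_x)$; the equality of the two set functions then follows from the measure-theoretic area formula, which reconstructs a Radon measure from its spherical Federer density relative to $\mathcal{S}^{\NN}$. The starting point is the local structure of the graph map. Since $(\W,\V)$ are complementary, $\Phi$ is injective, and the standing assumptions --- intrinsic differentiability at \emph{every} point of $A$ together with continuity of $w\mapsto d\phi_w$ --- make $\Phi$ an intrinsically Lipschitz homeomorphism of every compact part of $A$ onto its image, with $\Phi^{-1}$ of the same type. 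Consequently $\Phi$ and $\Phi^{-1}$ carry $\mathcal{H}^{n}_{|\cdot|}$-null sets to $\mathcal{S}^{\NN}$-null sets and back, the Jacobian $w\mapsto J\Phi(w)$ is continuous and strictly positive on $A$, and $x\mapsto\T_x$, hence $x\mapsto\beta_{d}(\T_x)$, is continuous on $\Sigma$; in particular the set function $\mu(B):=\int_{\Phi^{-1}(B)}J\Phi\,d\mathcal{H}^{n}_{|\cdot|}$ is a Radon measure on $\G$ that is absolutely continuous with respect to $\mathcal{S}^{\NN}\res\Sigma$ and locally comparable to it.

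The analytic core is a blow-up. Fixing $w\in A$ and $x=\Phi(w)$, the intrinsic differentiability of $\phi$ at $w$ yields the first-order expansion of $\Phi$ about $w$ with respect to the intrinsic dilations, so that the rescaled sets $\delta_{1/r}\big(x^{-1}\Sigma\big)$ converge, locally in the Hausdorff sense as $r\ra 0^+$, to the tangent subgroup $\T_x$, namely the graph of the intrinsic linear map $d\phi_w$; the continuity of $d\phi$ upgrades this to a blow-up that is uniform on compact subsets of $A$, which is exactly what permits interchanging limits and integrals below. The same expansion describes the rescaled preimages: $\delta_{1/r}\big(w^{-1}\Phi^{-1}(\bar B(x,r))\big)$ converges to the subset of $\W$ that the blown-up graph map sends into the unit ball $\bar B(0,1)$ of $d$. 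Since $\W$ is a homogeneous subgroup --- hence a graded linear subspace in exponential coordinates, on which left translations preserve $\mathcal{H}^{n}_{|\cdot|}$ while $\delta_r$ scales it by $r^{\NN}$ --- one then evaluates the spherical Federer density of $\mu$ at $x$: using the change of variables defining $\mu$, the continuity of $J\Phi$, and $\diam\Phi^{-1}(\bar B(x,r))\ra 0$, it factors as $J\Phi(w)$ times a fixed multiple of the $\mathcal{H}^{n}_{|\cdot|}$-size of the preimage of $\T_x\cap\bar B(0,1)$ under the blown-up graph map. The Jacobian $J\Phi$ of \cite[Definition~7.1]{CorMag23pr} is calibrated precisely so that this product equals $\beta_{d}(\T_x)$, which reflects that the most economical spherical coverings of $\Sigma$ near $x$ are obtained by translating the metric balls so as to maximise their section with the tangent subgroup. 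Because $\T_x$ is a homogeneous subgroup of Hausdorff dimension $\NN$, $\beta_{d}(\T_x)$ is finite and strictly positive.

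To conclude, $\mu$ is a Radon measure on $\G$ that does not charge $\mathcal{S}^{\NN}$-null sets and is concentrated on $\Sigma$, and by the previous step its spherical Federer density is the positive, finite, Borel function $x\mapsto\beta_{d}(\T_x)$ on $\Sigma$; the measure-theoretic area formula then gives $\mu=\beta_{d}(\T_\cdot)\,\mathcal{S}^{\NN}\res\Sigma$, which evaluated on an arbitrary Borel set $B\subset\Sigma$ is exactly \eqref{intro:areageneral}.

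I expect the main obstacle to be the density computation of the second step, namely proving that the spherical Federer density of $\mu$ coincides with $\beta_{d}(\T_x)$. This requires the blow-up of $\Sigma$ to be uniform enough to survive integration against $J\Phi$ --- which is why $d\phi$ is assumed continuous rather than merely measurable --- together with a careful analysis of how the metric balls $\bar B(x,r)$ pull back under $\Phi$ as $r\ra 0^+$, keeping track of the interplay between group translations, intrinsic dilations, and the non-translation-invariant Euclidean measure $\mathcal{H}^{n}_{|\cdot|}$, and of the identification of the resulting optimal section of the unit ball with the one that defines $\beta_{d}$.
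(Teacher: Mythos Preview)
This theorem is not proved in the present paper; it is quoted from \cite[Theorem~1.2]{CorMag23pr}, so there is no in-paper proof to compare against directly. That said, your outline matches the strategy one can infer from the paper's own use of the result: in the proof of Theorem~\ref{teo:eqSC} the authors introduce exactly the measure $\mu(B)=\int_{\Phi^{-1}(B)}J\Phi\,d\mathcal{H}^{n}_{|\cdot|}$ and invoke the ``upper blow-up'' of \cite[Theorem~1.1]{CorMag23pr} to identify its density, while the discussion following Theorem~\ref{t:area} names the measure-theoretic area formula of \cite[Theorem~11]{Mag30} (see also \cite{LecMag22}) as the device that turns a density identity into the integral formula. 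Your three-step plan --- define $\mu$, compute its spherical Federer density via the uniform blow-up guaranteed by the continuity of $d\phi$, then apply the measure-theoretic area formula --- is therefore the intended one, and you have correctly located the main technical burden in the density computation.
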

We refer to \cite{CorMag23pr} and Section~\ref{sect:prelim} for more information about the notions appearing in this introduction and further literature.
In the sequel, the symbol  $\G$ will denote a homogeneous group, if not otherwise stated.

A central notion for the present work is that of {\em spherical factor} $\beta_d(\cdot)$, that is a real function acting on the ``intrinsic tangent spaces'' to the set $\Sigma$.

\begin{deff}[Spherical factor]\label{def:sphericalfactor}
	Let $V\subset\G$ be a linear subspace of dimension $n$. The \emph{spherical factor of a homogeneous distance $d$ with respect to 
		$V$} is the number
	$$ \beta_d (V )= \max_{z \in \mathbb{B}(0,1)} \mathcal{H}^n_{|\cdot|} (V \cap \mathbb{B}(z,1)),$$
	where $\G$ is equipped with a fixed scalar product and the associated
	norm $|\cdot|$.
\end{deff}

We wish to stress that in the previous definition $\cH^n_{|\cdot|}$ is the Euclidean $n$-dimensional Hausdorff measure. Passing from intrinsic graphs to smooth submanifolds of homogeneous groups requires a different area formula, see (1.7) of \cite{Magnani2019Area}. 
An underlying difficulty to obtain this formula is that smooth submanifolds, which are not tangent to a horizontal distribution, need not be rectifiable in the Federer's sense, \cite[3.2.14]{Federer69}, 
using the distance of the group.

The next theorem can be seen as a sort of ``meta area formula'', where the basic conditions that give the formula are assumed.
For notation and definitions of this theorem, we refer to \cite{Magnani2019Area} and Section~\ref{sect:prelim}.

\begin{teo}[Area formula for smooth submanifolds]\label{t:area}
We consider a homogeneous group $\G$ and an $n$-dimensional submanifold $\Sigma\subset\G$ of degree $\NN$, and of class $C^1$. 
The spherical measure $\cS^\NN$ is constructed by a fixed homogeneous distance $d$. Let us assume the following two conditions. 
\begin{enumerate}
\item[I.]
Any $x\in\Sigma$ of maximum degree $\NN$ satisfies the ``upper blow-up'', namely
\[
\theta^\NN(\mu_\Sigma,x)=\beta_d(A_x\Sigma).
\]
\item[II.] 
	The subset of points in $\Sigma$ having degree
	less than $\NN$ is $\cS^\NN$-negligible.
\end{enumerate}
Then for any Borel set $B\subset\Sigma$ we have
\beq\label{eq:areaintroN}
\int_B \beta_d(A_x\Sigma)\, d\cS^\NN(x)=\int_B \|\tau^{\tilde g}_{\Sigma,\NN}(x)\|_g\, d\sigma_{\tilde g}(x).
\eeq
\end{teo}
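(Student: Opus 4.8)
The plan is to read the right-hand side of \eqref{eq:areaintroN} as a Radon measure on $\Sigma$ and to conclude through the measure-theoretic area formula for spherical measures (see \cite{Magnani2019Area} and Section~\ref{sect:prelim}). Precisely, set
$$
\mu_\Sigma(E)=\int_E\|\tau^{\tilde g}_{\Sigma,\NN}(x)\|_g\,d\sigma_{\tilde g}(x)\qquad\text{for every Borel }E\subset\Sigma.
$$
Since $\Sigma$ is of class $C^1$ and $x\mapsto\|\tau^{\tilde g}_{\Sigma,\NN}(x)\|_g$ is continuous, $\mu_\Sigma$ is locally finite, hence Borel regular and admitting a countable open cover by sets of finite $\mu_\Sigma$-measure. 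The measure-theoretic area formula then yields $\cS^\NN\res\Sigma=\theta^\NN(\mu_\Sigma,\cdot)^{-1}\,\mu_\Sigma\res\Sigma$, the identity being read on the set where the spherical Federer density $\theta^\NN(\mu_\Sigma,\cdot)$ is finite and positive, provided that the set where it vanishes is $\cS^\NN$-negligible, the set where it is infinite is $\mu_\Sigma$-negligible, and $\cS^\NN\res\Sigma$ is $\sigma$-finite. Hence everything reduces to computing $\theta^\NN(\mu_\Sigma,\cdot)$ and to checking these negligibility and finiteness properties.

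The key observation is that $\mu_\Sigma$ is carried by the subset $\Sigma_\NN\subset\Sigma$ of points of degree $\NN$, because $\tau^{\tilde g}_{\Sigma,\NN}(x)=0$ exactly at the points of degree strictly less than $\NN$. On $\Sigma_\NN$, Condition~I gives $\theta^\NN(\mu_\Sigma,x)=\beta_d(A_x\Sigma)$, and this quantity lies in $(0,\infty)$ uniformly in $x$. It is finite since, by the triangle inequality for $d$, $\B_d(z,1)\subset\B_d(0,2)$ for $z\in\B_d(0,1)$, and $\B_d(0,2)$ is bounded with respect to the fixed norm $|\cdot|$; thus $A_x\Sigma\cap\B_d(z,1)$ is contained in a Euclidean ball of the $n$-plane $A_x\Sigma$ whose radius is independent of $x$, so its $\cH^n_{|\cdot|}$-measure is bounded by a finite constant depending only on $d$ and $n$. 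It is bounded below by a positive constant depending only on $d$ and $n$ as well: choosing $r_0>0$ with $\B_{|\cdot|}(0,r_0)\subset\B_d(0,1)$, one has $\beta_d(A_x\Sigma)\ge\cH^n_{|\cdot|}(A_x\Sigma\cap\B_{|\cdot|}(0,r_0))$, the Lebesgue measure of an $n$-dimensional Euclidean ball of radius $r_0$. Consequently both the set where $\theta^\NN(\mu_\Sigma,\cdot)$ vanishes and the set where it is infinite are contained in $\Sigma\setminus\Sigma_\NN$, which is $\mu_\Sigma$-negligible by the previous remark and $\cS^\NN$-negligible by Condition~II; in particular $\cS^\NN\res\Sigma=\cS^\NN\res\Sigma_\NN$. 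Finally, the uniform positive lower bound on $\theta^\NN(\mu_\Sigma,\cdot)$ on $\Sigma_\NN$ forces, by a standard density comparison, $\cS^\NN\res\Sigma_\NN$ to be locally finite, so $\cS^\NN\res\Sigma$ is $\sigma$-finite (alternatively one invokes the local finiteness of $\cS^\NN\res\Sigma$ for $C^1$ submanifolds of degree $\NN$).

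Collecting everything, the measure-theoretic area formula gives $\cS^\NN\res\Sigma_\NN=\theta^\NN(\mu_\Sigma,\cdot)^{-1}\mu_\Sigma\res\Sigma_\NN$. Replacing $\cS^\NN\res\Sigma_\NN$ by $\cS^\NN\res\Sigma$ through Condition~II, dropping the restriction on the right-hand side since $\mu_\Sigma$ is carried by $\Sigma_\NN$, and substituting $\theta^\NN(\mu_\Sigma,x)=\beta_d(A_x\Sigma)$, we obtain the identity of measures $\beta_d(A_x\Sigma)\,\cS^\NN\res\Sigma=\mu_\Sigma$; testing it against a Borel set $B\subset\Sigma$ produces \eqref{eq:areaintroN}. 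Since the two hard inputs, Conditions~I and~II, are assumed here, I expect the only genuine difficulty to be organizational: making sure the exceptional sets of the density theorem (where the density is $0$ or $\infty$) are simultaneously absorbed by Conditions~I and~II, and that the required $\sigma$-finiteness is obtained without circularity. The substantive analysis — the blow-up estimate underlying Condition~I and the degree and rectifiability structure underlying Condition~II — is taken as a hypothesis, and is precisely what must be established, in \cite{Magnani2019Area} and in the sequel, for the concrete classes of submanifolds one has in mind.
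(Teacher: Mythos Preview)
Your argument is correct and follows exactly the route indicated in the paper: apply the measure-theoretic area formula (\cite[Theorem~11]{Mag30}, see also \cite{LecMag22}) to the Radon measure $\mu_\Sigma$, and absorb the exceptional set of points of lower degree via Condition~II. The paper states this as an immediate consequence without spelling out the verification of the hypotheses of the density theorem; your expansion of those checks (positivity and finiteness of $\beta_d(A_x\Sigma)$, $\mu_\Sigma$ carried by $\Sigma_\NN$, $\sigma$-finiteness of $\cS^\NN\res\Sigma$) is accurate and fills in precisely what is left implicit.
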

The proof of \eqref{eq:areaintroN} is an immediate consequence of
the measure-theoretic area formula, \cite[Theorem~11]{Mag30}, see also \cite{LecMag22}, and of the negligibility assumption on the set of points of lower degree. For instance, in \cite[Theorem~1.3]{Magnani2019Area} one can find specific cases where the assumptions of Theorem~\ref{t:area} 
are satisfied, see also the references therein.
To the best of our knowledge, area formulas for the spherical measure of smooth submanifolds first
appeared in the works of  P.~Pansu \cite{Pansu82,Pansu82ineq} in Heisenberg groups, and of J.~Heinonen \cite{Hei1995CalcCG} in general Carnot groups.

We are interested in showing that whenever either Theorem~\ref{teo:areaIntro} or Theorem~\ref{t:area} can be applied and the homogeneous distance has some specific symmetry properties, then 
we obtain the standard forms of the area formula, stated in \eqref{eq:areaintroSym}.
In this sense, we may consider the present work
as a continuation of \cite{Magnani2019Area,Mag22RS,CorMag23pr}, to which our results apply.

The objective of our ``symmetry results'' is to find those conditions for which the spherical factor 
$\beta_d(\cdot)$ becomes a {\em geometric constant}. 
The simplest and well known (commutative) case is that of an $n$-dimensional subspace $V$ of the Euclidean space $\R^\q$, for which $\beta_{d_E}(V)=\omega_n$, $d_E$ is the Euclidean distance and $\beta_{d_E}(\cdot)$ is given
in Definition~\ref{def:sphericalfactor}. 
The constant $\omega_n$ here is the Lebesgue measure 
of the Euclidean unit ball of $\R^n$, that usually appears in the definition of 
the $n$-dimensional Hausdorff measure.

The area formulas \eqref{intro:areageneral} and \eqref{eq:areaintroN} become definitively simpler for those homogeneous distances having constant spherical factor on a family $\cF$ of subspaces which includes all the ``suitable'' tangent spaces to $\Sigma$.
Then the constant spherical factor $\omega_d(\cF)$ can be introduced in the definition of spherical measure 
$\cS_d^\NN=\omega_d(\cF)\cS^\NN$, getting
\beq\label{eq:areaintroSym}
\mathcal{S}^{\NN}_d(\Sigma)=\int_{\Phi^{-1}(\Sigma)} J\Phi(w)\  d \mathcal{H}_{|\cdot|}^{n}(w)\quad\text{and}\quad\cS^\NN_d(\Sigma)= \int_\Sigma\|\tau^{\tilde g}_{\Sigma,\NN}(x)\|_g\, d\sigma_{\tilde g}(x).
\eeq
In Heisenberg groups, the area formulas having the useful form of \eqref{eq:areaintroSym} can be found in several works, 
\cite{FSSC01, FSSC6, CMPSC14, Vit22, Mag22RS, CorMag23}. These two standard forms of the area formula for an intrinsic graph and for a smooth submanifold are a straightforward consequence of \eqref{intro:areageneral} and \eqref{eq:areaintroN}, respec\-tively, where we have a constant spherical factor. 
Thus, we have some motivations to introduce the following notion, see also \cite[Definition~1.2]{Mag22RS}.

\begin{deff}[Rotationally symmetric distance]
	We consider a nonempty class $\cF$ of homogeneous subspaces. A homogeneous distance $d$ on a homogeneous group $\G$ is called \textit{rotationally symmetric with respect to $\cF$}, if the spherical factor $\beta_d(\cdot)$ is a constant function on $\cF$. We denote by $\omega_d(\cF)$ the constant value assumed by the restriction of the function $\beta_d(\cdot)$ to $\cF$.
\end{deff}

We are mainly concerned with {\em multiradial distances}, which represent a special class of homogeneous distances,
see \cite{Magnani2019Area} and \cite{Mag22RS}. We will show that multiradial distances are rotationally symmetric with respect to a large class of subspaces. The next definition weakens the assumptions of 
\cite[Definition~5.1]{Mag22RS}.

\begin{deff}[Multiradial distance]\label{def:multiradialdistance}
	A homogeneous distance $d$ on a
	homogeneous group $\G$ is \textit{multiradial} if there exists a continuous function $\varphi : [0, + \infty)^{\iota} \to  [0,+ \infty)$ that is monotone nondecreasing on each single variable, satisfies 
	$\ph(0)=0$ and the metric unit ball is 
	$$ 
	\B(0,1)=\set{x\in\G:\varphi(|x_1|, \ldots,  |x_{\iota}|)\le 1},
	$$
	where $x_j = P_{H_j}(x)$ for $j=1, \ldots, \iota$ and 
	$P_{H_j}$ are defined in \eqref{eq:PHj}.
	We finally require the ``coercivity condition''
	$\varphi(t_1, \ldots, t_\iota)\to+\infty$ as $|t|\to+\infty$.
\end{deff}

We have removed the assumption that the function $\ph$ defining the profile of the metric unit ball also represents the formula for the distance from the unit element. As a consequence, Definition~\ref{def:multiradialdistance} also includes the distance of \cite[Theorem~2]{HebSik90}, whose metric unit ball is a Euclidean ball of suitably small radius.
Clearly, the rescaled Euclidean distance cannot be a homogeneous distance in any graded group of step higher than one.
Other more common examples of multiradial distances are the Cygan--Kor\'anyi distance in H-type groups, \cite{Cygan81}, and the distance $d_\infty$ of \cite[Section~2.1]{FSSC6}. 

An important technical aspect is a simplified formula for the spherical factor with respect to a multiradial distance, established in Theorem~\ref{T1}.
The main consequence is Theorem~\ref{teo:multirot}, where we prove that multiradial distances are rotationally symmetric with respect to a large family of subspaces $\mathcal{F}_{n_1, n_2, \ldots, n_{\iota}}$, see Definition~\ref{def:F_n1n2..}. 
The first application of Theorem~\ref{teo:multirot} allows us to establish a simpler version of \eqref{intro:areageneral}
for multiradial distances, according to the next theorem, proved in Section~\ref{sect:multiradial}.

\begin{teo}[Area of intrinsic graphs for multiradial distances]\label{T2}
	In the hypotheses of Theorem~\ref{teo:areaIntro}, we also assume that $\cS^\NN$ is constructed by a multiradial distance $d$.
	We set $\Sigma=\Phi(A)$ and define the integers $n_i=\dim(\W \cap H_i)$ for every $i=1, \ldots, \iota$. 
	We denote by $\omega_d(\mathcal{F}_{n_1, \ldots, n_{\iota}})$ the constant spherical factor, due to Theorem~\ref{teo:multirot}. If we set
	$\mathcal{S}_d^\NN=\omega_d(\mathcal{F}_{n_1, \ldots, n_{\iota}}) \mathcal{S}^\NN$, then
	for every Borel set $B \subset \Sigma$ we have
	\begin{equation}\label{eq:SN(B)} 
		\mathcal{S}_d^\NN(B)=\int_{\Phi^{-1}(B)} J\Phi(w)\  d \mathcal{H}_{|\cdot|}^{n} (w).
	\end{equation}
\end{teo}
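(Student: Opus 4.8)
The plan is to reduce Theorem~\ref{T2} to a direct application of Theorem~\ref{teo:areaIntro} together with the rotational symmetry of multiradial distances provided by Theorem~\ref{teo:multirot}. The key observation is that the only obstruction to passing from \eqref{intro:areageneral} to the cleaner form \eqref{eq:SN(B)} is the presence of the weight $\beta_d(\T_x)$ inside the left-hand integral; if this weight is a \emph{constant} independent of $x\in\Sigma$, we may absorb it into the spherical measure and obtain \eqref{eq:SN(B)}.

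\medskip

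\textbf{Step 1: Identify the relevant class of tangent subgroups.} For each $x=\Phi(w)\in\Sigma$, the tangent subgroup $\T_x$ to $\Sigma$ at $x$ is a homogeneous subspace obtained, up to a left translation that is an isometry for $d$, from the intrinsic differential $d\phi(w)$. The crucial structural fact is that $\T_x$ decomposes compatibly with the splitting $\G=H_1\oplus\cdots\oplus H_\iota$ induced by the grading underlying the multiradial distance: since $\phi$ maps $\W$ into the complementary subgroup $\V$ and the intrinsic differential is a homogeneous homomorphism $\W\to\V$, for each layer $H_i$ one has $\dim(\T_x\cap H_i)=\dim(\W\cap H_i)=n_i$, independent of $x$. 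I would verify this by recalling (from Section~\ref{sect:prelim} and \cite{CorMag23pr}) that the graph of a homogeneous homomorphism has the same ``layer dimensions'' as its domain. Consequently $\T_x\in\mathcal{F}_{n_1,\ldots,n_\iota}$ for every $x\in\Sigma$.

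\medskip

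\textbf{Step 2: Apply rotational symmetry.} By Theorem~\ref{teo:multirot}, a multiradial distance $d$ is rotationally symmetric with respect to $\mathcal{F}_{n_1,\ldots,n_\iota}$, so $\beta_d(\T_x)=\omega_d(\mathcal{F}_{n_1,\ldots,n_\iota})$ for all $x\in\Sigma$. (Here one uses that $\beta_d$ is invariant under left translations, since $\cH^n_{|\cdot|}$ and the metric balls $\B(z,1)$ are, which lets us pass from the translated model subgroup to $\T_x$ itself; this invariance is part of the setup in Definition~\ref{def:sphericalfactor} and the preceding discussion.) Substituting this constant into \eqref{intro:areageneral} gives
\[
\omega_d(\mathcal{F}_{n_1,\ldots,n_\iota})\,\mathcal{S}^\NN(B)=\int_{\Phi^{-1}(B)} J\Phi(w)\,d\mathcal{H}^n_{|\cdot|}(w)
\]
for every Borel $B\subset\Sigma$. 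By the definition $\mathcal{S}_d^\NN=\omega_d(\mathcal{F}_{n_1,\ldots,n_\iota})\,\mathcal{S}^\NN$, the left-hand side is exactly $\mathcal{S}_d^\NN(B)$, which is \eqref{eq:SN(B)}.

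\medskip

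\textbf{Main obstacle.} The routine part is the substitution; the genuine content is Step~1, namely checking that \emph{every} tangent subgroup $\T_x$ that can occur lies in the prescribed family $\mathcal{F}_{n_1,\ldots,n_\iota}$ with the \emph{same} tuple $(n_1,\ldots,n_\iota)$. This requires knowing that intrinsic differentiability forces $d\phi(w)$ to be a graded (degree-preserving) homomorphism, so that the layerwise dimension count is rigid and does not jump as $x$ varies over $\Sigma$. Once this is in hand — and it is essentially built into the definition of $\IL(\W,\V)$ and the graph map $\Phi$ recalled in Section~\ref{sect:prelim} — the theorem follows with no further analysis. A secondary point to state carefully is that one does not need $\beta_d$ to be constant on \emph{all} subspaces of a given dimension, only on $\mathcal{F}_{n_1,\ldots,n_\iota}$, which is precisely the gain of working with multiradial distances rather than general homogeneous ones.
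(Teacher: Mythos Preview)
Your proposal is correct and follows essentially the same route as the paper: show that every tangent subgroup $\T_x$ lies in $\mathcal{F}_{n_1,\ldots,n_\iota}$, then invoke Theorem~\ref{teo:multirot} to make $\beta_d(\T_x)$ constant and absorb it into $\mathcal{S}^\NN$. For the crucial Step~1 the paper argues slightly differently, citing \cite[Theorem~3.2.8]{FMS14} to get that $(\T_x,\V)$ is a couple of complementary subgroups and then \cite[Proposition~7.2]{Mag14} to conclude $\dim(\T_x\cap H_i)=\dim(\W\cap H_i)$; your graph-of-a-graded-homomorphism reasoning is an equivalent way to reach the same dimension count, and the parenthetical about left-translation invariance in Step~2 is unnecessary since $\T_x$ is already a homogeneous subspace through the origin.
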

We emphasize that the regular sets of Theorem~\ref{T2} also include the large class $(\G,\M)$-regular sets of $\G$, introduced in \cite[Definition 3.5]{Mag5} and subsequently studied in \cite{Mag14,JNGV22}. For these sets, a special form of the area formula holds, see \cite[Theorem~1.4]{CorMag23pr}.
As a result, Theorem~\ref{T2} leads us to the next result.
Concerning the notions involved in the next corollary and its proof, we refer the reader to \cite{CorMag23pr} and Section~\ref{sect:multiradial}.

\begin{cor}[Area of level sets for multiradial distances]\label{cor:levelsetmultiradial}
	Let $\Omega \subset \G$ be an open set and let $f \in C^1_h(\Omega, \M)$. Let us define the level set $\Sigma=f^{-1}(0)$ and assume that there exist an open set $\Omega' \subset \Omega$ and a homogeneous subgroup $\V \subset \G$ of topological dimension $\p$ such that
	$J_\V f(y)> 0$
	for any $y \in \Sigma \cap \Omega'$. Let $\W \subset \G$ be a homogeneous subgroup complementary to $\V$ and consider the unique map $\phi:A\to\V$, 
	whose graph mapping 
	$\Phi:A\to\G$ satisfies $\Sigma \cap \Omega' = \Phi(A)$, where $A\subset\W$ is an open set. Let $\bV$ be an orienting unit $\p$-vector of $\V$ and let $\bW$ be an orienting unit $(\q-\p)$-vector of $\W$. 
	
	We assume that $\G$, of topological dimension $\q$, is equipped with a multiradial distance $d$ and in view of Theorem~\ref{teo:multirot}, we set 
	$\mathcal{S}_d^{Q-P}=\omega_d(\mathcal{F}_{n_1, \ldots, n_{\iota}}) \mathcal{S}^{Q-P}$.
	Thus, for every Borel set $B \subset \Sigma \cap \Omega'$, we have
	\begin{equation}
		\mathcal{S}_d^{Q-P}(B)= | \bV \wedge \bW| \int_{\Phi^{-1}(B)} \frac{J_Hf(\Phi(n))}{J_{\V}f(\Phi(n))} \  d \mathcal{H}_{|\cdot|}^{\q-\p} (n),
	\end{equation}
	where $Q$ and $P$ denote the Hausdorff dimensions of
	$\G$ and $\M$, respectively. 
\end{cor}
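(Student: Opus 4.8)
The plan is to derive Corollary~\ref{cor:levelsetmultiradial} as a direct specialization of Theorem~\ref{T2} combined with the explicit form of the area formula for $C^1_h$ level sets recorded in \cite[Theorem~1.4]{CorMag23pr}. The starting point is the observation that, under the hypotheses on $f$, the implicit function theorem for $C^1_h$ maps produces the complementary subgroups $\V$ and $\W$, the open set $A\subset\W$, and the intrinsically differentiable map $\phi:A\to\V$ with continuous intrinsic differential, so that $\Sigma\cap\Omega'=\Phi(A)$ with $\Phi(w)=w\phi(w)$. This places us precisely in the hypotheses of Theorem~\ref{teo:areaIntro}, hence of Theorem~\ref{T2}, provided the distance $d$ is multiradial — which is assumed. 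Thus \eqref{eq:SN(B)} applies verbatim to every Borel set $B\subset\Sigma\cap\Omega'$, giving
\[
\mathcal{S}_d^{Q-P}(B)=\int_{\Phi^{-1}(B)} J\Phi(w)\, d\mathcal{H}_{|\cdot|}^{\q-\p}(w),
\]
where we have identified $\NN=Q-P$ and $n=\q-\p$, using that the Hausdorff dimension of $\W$ is $Q-P$ and its topological dimension is $\q-\p$ (since $\W$ is complementary to $\V$, which has Hausdorff dimension $P$ and topological dimension $\p$).

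The second step is to rewrite the intrinsic Jacobian $J\Phi$ of the graph map in terms of the horizontal Jacobian $J_Hf$ and the $\V$-Jacobian $J_\V f$. This is exactly the content of \cite[Theorem~1.4]{CorMag23pr}: for a level set of an $f\in C^1_h(\Omega,\M)$ with $J_\V f>0$ along $\Sigma\cap\Omega'$, one has the pointwise identity
\[
J\Phi(w)=|\bV\wedge\bW|\,\frac{J_Hf(\Phi(w))}{J_\V f(\Phi(w))},
\]
where $\bV,\bW$ are the chosen orienting unit multivectors. Substituting this expression into the displayed formula for $\mathcal{S}_d^{Q-P}(B)$ and renaming the integration variable $w$ as $n$ yields precisely the claimed identity. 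No further estimates are needed: the corollary is a transcription obtained by chaining two already-established results together with the implicit function theorem.

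The only genuine point requiring care — and the one I would expect to be the ``main obstacle'', such as it is — is checking the compatibility of the normalization constants and the identification of the parameters $n_i=\dim(\W\cap H_i)$ appearing in $\omega_d(\mathcal{F}_{n_1,\ldots,n_\iota})$. One must verify that the family $\mathcal{F}_{n_1,\ldots,n_\iota}$ indeed contains all tangent subgroups $\T_x$ to $\Sigma$ for $x$ ranging over $\Phi(A)$, so that Theorem~\ref{teo:multirot} genuinely delivers a single constant spherical factor, and that this is the same constant used to define $\mathcal{S}_d^{Q-P}$ in both Theorem~\ref{T2} and the corollary. Since the tangent subgroup $\T_x$ of an intrinsic graph $\Phi(A)$ is, at every point, a homogeneous subgroup complementary to $\V$ with $\dim(\T_x\cap H_i)=\dim(\W\cap H_i)=n_i$ for each $i$ — a structural fact about intrinsic graphs over $(\W,\V)$ — this membership is automatic, and the constant is well defined. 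With this checked, the chain of equalities
\[
\mathcal{S}_d^{Q-P}(B)=\omega_d(\mathcal{F}_{n_1,\ldots,n_\iota})\,\mathcal{S}^{Q-P}(B)=\int_{\Phi^{-1}(B)} J\Phi(n)\, d\mathcal{H}_{|\cdot|}^{\q-\p}(n)=|\bV\wedge\bW|\int_{\Phi^{-1}(B)}\frac{J_Hf(\Phi(n))}{J_\V f(\Phi(n))}\, d\mathcal{H}_{|\cdot|}^{\q-\p}(n)
\]
completes the proof.
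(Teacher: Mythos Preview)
Your proposal is correct and follows essentially the same route as the paper: invoke the implicit function theorem for $C^1_h$ maps to obtain a continuously intrinsically differentiable $\phi$, apply Theorem~\ref{T2}, and then substitute the explicit Jacobian identity $J\Phi(w)=|\bV\wedge\bW|\,J_Hf(\Phi(w))/J_\V f(\Phi(w))$ from \cite{CorMag23pr}. Your final paragraph checking that $\T_x\in\mathcal{F}_{n_1,\ldots,n_\iota}$ is redundant here, since that verification is already carried out inside the proof of Theorem~\ref{T2} itself (via \cite[Proposition~7.2]{Mag14}), so once you invoke Theorem~\ref{T2} no further work on the spherical factor is needed.
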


As already mentioned, Theorem~\ref{teo:multirot} has also consequences for the area formula of smooth submanifolds. Joining such theorem with Theorem~\ref{t:area}, we obtain a ``standard form'' of the area formula for smooth submanifolds.

\begin{teo}[Area of smooth submanifolds for multiradial distances]\label{teo:areasm}
	We consider a homogeneous group $\G$ and an $n$-dimensional submanifold $\Sigma\subset\G$ of degree $\NN$, and of class $C^1$. 
	The spherical measure $\cS^\NN$ is constructed by a multiradial distance $d$. Let us assume that the following three conditions hold. 
	\begin{enumerate}
		\item[I.]
		Any $p\in\Sigma$ of maximum degree $\NN$ satisfies the ``upper blow-up'', namely
		\[
			\theta^\NN(\mu_\Sigma,p)=\beta_d(A_p\Sigma).
		\]
		\item[II.] 
		The subset of points in $\Sigma$ having degree
		less than $\NN$ is $\cS^\NN$-negligible.
		\item[III.]
		We have $A_p\Sigma\in\cF_{n_1,\ldots,n_\iota}$ for each
		homogeneous tangent spaces at a point $p$ of maximum degree. 
	\end{enumerate}
	Then for any Borel set $B\subset\Sigma$ we have
	\beq\label{eq:areaintroNmulti}
	\cS_d^\NN(B)=\int_B \|\tau^{\tilde g}_{\Sigma,\NN}(p)\|_g\, d\sigma_{\tilde g}(p),
	\eeq
	where we have set $\cS^\NN_d=\omega(\cF_{n_1,\ldots,n_\iota})\cS^\NN$,
	and $\omega(\cF_{n_1,\ldots,n_\iota})$ is the constant spherical factor, due Theorem~\ref{teo:multirot}.
\end{teo}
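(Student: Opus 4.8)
The plan is to obtain \eqref{eq:areaintroNmulti} as a direct combination of Theorem~\ref{t:area} with the rotational symmetry of multiradial distances established in Theorem~\ref{teo:multirot}; no new analytic ingredient is needed, and the argument is essentially a bookkeeping of these two inputs.

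First I would observe that hypotheses~I and~II of the present statement are verbatim hypotheses~I and~II of Theorem~\ref{t:area}. Hence that theorem applies and yields, for every Borel set $B\subset\Sigma$, the identity \eqref{eq:areaintroN}, whose right-hand side is precisely the right-hand side of \eqref{eq:areaintroNmulti}. As recalled immediately after Theorem~\ref{t:area}, this step rests only on the measure-theoretic area formula \cite[Theorem~11]{Mag30} together with the negligibility in hypothesis~II, so nothing has to be reproved here.

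Next I would bring in hypothesis~III. By Theorem~\ref{teo:multirot}, the multiradial distance $d$ is rotationally symmetric with respect to the single family $\cF_{n_1,\ldots,n_\iota}$, meaning that $\beta_d$ restricted to this family is the constant $\omega(\cF_{n_1,\ldots,n_\iota})$. Since hypothesis~III guarantees $A_p\Sigma\in\cF_{n_1,\ldots,n_\iota}$ at every point $p$ of maximum degree $\NN$, we get $\beta_d(A_p\Sigma)=\omega(\cF_{n_1,\ldots,n_\iota})$ at each such point. By hypothesis~II the set of points of $\Sigma$ of degree strictly less than $\NN$ is $\cS^\NN$-negligible; therefore the function $p\mapsto\beta_d(A_p\Sigma)$, which is $\cS^\NN$-measurable since it is integrated in \eqref{eq:areaintroN}, coincides $\cS^\NN$-almost everywhere on $\Sigma$ with the constant $\omega(\cF_{n_1,\ldots,n_\iota})$.

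Finally, inserting this constant in the left-hand side of \eqref{eq:areaintroN}, factoring it out of the integral, and using the definition $\cS^\NN_d=\omega(\cF_{n_1,\ldots,n_\iota})\,\cS^\NN$, the left-hand side of \eqref{eq:areaintroN} becomes $\omega(\cF_{n_1,\ldots,n_\iota})\,\cS^\NN(B)=\cS^\NN_d(B)$, while its right-hand side is untouched; this is exactly \eqref{eq:areaintroNmulti}. I do not expect a genuine obstacle here: the only delicate points are that $\beta_d$ takes one and the same value on all the tangent spaces that occur, which is exactly what Theorem~\ref{teo:multirot} provides once hypothesis~III places them in a common family $\cF_{n_1,\ldots,n_\iota}$, and that hypotheses~II and~III jointly let one disregard the points where $A_p\Sigma$ need not belong to that family; both have already been handled by the quoted results.
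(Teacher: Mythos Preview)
Your proposal is correct and matches the paper's approach: the authors do not provide a separate proof of this theorem, simply remarking in the introduction that it is obtained by ``joining'' Theorem~\ref{teo:multirot} with Theorem~\ref{t:area}. Your write-up makes this combination explicit and handles the one bookkeeping point (hypothesis~II lets you ignore points of lower degree, where $A_p\Sigma$ need not lie in $\cF_{n_1,\ldots,n_\iota}$) exactly as intended.
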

In a few words, whenever an area formula holds and all homogeneous tangent spaces belong to $\cF_{n_1,\ldots,n_\iota}$, then \eqref{eq:areaintroNmulti} holds for multiradial distances.
The previous theorem can be also seen as a tool to obtain the standard area formula for the spherical measure of a smooth submanifold.
We notice that Theorem~\ref{teo:areasm} includes \cite[Theorem~1.3]{Mag22RS}.

The last part of this work is devoted to the relationship between spherical measure and centered Hausdorff measure on subsets of
homogeneous groups. The $\alpha$-dimensional centered Hausdorff measure $\cC^\alpha$, also called {\em covering measure}, is well known in Fractal Geometry, \cite{SaintRayTri88,Edg2007}.  
In the setting of homogeneous groups, it has been first studied in \cite{FSSC15}, where among other things, the equality $\cC^Q=\cS^Q$ was proved in any homogeneous group of Hausdorff dimension $Q$. 
For the multiradial distance $d_\infty$, the authors also proved the equality $\cC^{Q-1}=\cS^{Q-1}$ on one codimensional intrinsic regular sets, and then for $\G$-rectifiable sets, see \cite[Theorem~4.28]{FSSC15}.

Our last result is the extension of the equality between spherical measure and centered Hausdorff measure to higher codimensional intrinsic graphs.

\begin{teo}\label{teo:C=S}
	Let $\Sigma \subset \G$ be an intrinsic graph 
	associated with a couple of complementary subgroups $(\W,\V)$ and
	of Hausdorff dimension $\NN$. 
	Both spherical measure and centered Hausdorff measure are constructed
	by a fixed homogeneous distance $d$ on $\G$. 
	We assume that one of the following two conditions holds.
	\begin{enumerate}
		\item 
		$\Sigma$ is the graph of the mapping $\phi:A \to \V$, where $A \subset \W$ is open, $\phi$ is continuously intrinsically differentiable on $A\subset\W$ and $d$ is multiradial. 
		\item $\Sigma$ is a $(\G,\M)$-regular set of $\G$ and the metric unit ball $\B(0,1)$ of $d$ is convex.
	\end{enumerate} 
Then in any of the two conditions it follows that 	
\[
\mathcal{S}^\NN\res \Sigma=\mathcal{C}^\NN\res\Sigma.
\]
\end{teo}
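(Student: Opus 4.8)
\textbf{Proof plan for Theorem~\ref{teo:C=S}.}

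The plan is to reduce the equality $\cS^\NN\res\Sigma=\cC^\NN\res\Sigma$ to a pointwise comparison of densities, exploiting the general fact that $\cC^\alpha\le\cS^\alpha\le 2^\alpha\cC^\alpha$ always holds, and that equality of the two measures follows once we show they have the same Radon--Nikodym derivative with respect to a common reference measure. The natural reference measure is the pushforward $\Phi_\#\big(J\Phi\,\cH^n_{|\cdot|}\big)$ on $\Sigma$ (equivalently, $\sigma_{\tilde g}$-type measure), which by Theorem~\ref{teo:areaIntro} and Theorem~\ref{T2} already represents $\cS^\NN\res\Sigma$ (in case (1), up to the constant $\omega_d(\cF_{n_1,\ldots,n_\iota})$; in case (2), via \cite[Theorem~1.4]{CorMag23pr}). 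So the heart of the matter is to prove that $\cC^\NN\res\Sigma$ equals the \emph{same} measure, and for that one uses the differentiation theorem for the centered Hausdorff measure: $\cC^\NN\res\Sigma$ is characterized by its upper and lower $\NN$-densities with respect to metric balls \emph{centered at points of $\Sigma$}, precisely the densities that the ``upper blow-up'' / spherical-factor machinery controls.

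Concretely, I would proceed as follows. First, recall the density characterization of $\cC^\alpha$ (as in \cite{FSSC15}): for a Borel measure $\mu$ on $\Sigma$ one has $\cC^\NN\res\Sigma = \mu$ provided that at $\mu$-a.e.\ $x$ the centered density $\Theta^{*\NN}(\mu,x)$ equals the corresponding density that $\cS^\NN$ sees, i.e.\ the blow-up limit $\mu_\Sigma$-a.e.\ is the Haar measure of the tangent subgroup $\T_x$ read against metric balls centered at $x$. Second, invoke the blow-up result underlying Theorem~\ref{teo:areaIntro} (from \cite{CorMag23pr}): at $\cS^\NN$-a.e.\ $x\in\Sigma$, the rescalings of $\cS^\NN\res\Sigma$ converge to $\cH^\NN_d\res\T_x$, and the density of $\cS^\NN\res\Sigma$ with respect to the reference measure is $\beta_d(\T_x)$. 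Third — and this is the key computation — show that for \emph{centered} balls the blow-up density is $\cH^\NN_{|\cdot|}(\T_x\cap\B(0,1))$, i.e.\ the value $z=0$ in Definition~\ref{def:sphericalfactor}, rather than the maximizing $z$. Under hypothesis (1), multiradiality and Theorem~\ref{teo:multirot} force $\beta_d(\T_x)=\cH^\NN_{|\cdot|}(\T_x\cap\B(0,1))$ because the maximum in the spherical factor is already attained at the center for the relevant subspaces in $\cF_{n_1,\ldots,n_\iota}$ (this is exactly the content of the simplified spherical-factor formula of Theorem~\ref{T1}); under hypothesis (2), convexity of $\B(0,1)$ gives the same conclusion by a symmetrization/midpoint argument: for a convex symmetric body the slice $V\cap\B(z,1)$ has maximal $\cH^n_{|\cdot|}$-measure at $z=0$, since $V\cap\B(0,1)\supseteq\frac12\big(V\cap\B(z,1)\big)+\frac12\big(V\cap\B(-z,1)\big)$ and the two translates have equal measure, so Brunn's inequality (or mere translation of each half) applies. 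Fourth, conclude that the centered density of the reference measure coincides with its $\cS^\NN$-density, hence $\cC^\NN\res\Sigma$ and $\cS^\NN\res\Sigma$ are the same Borel measure.

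I expect the main obstacle to be the rigorous passage to the centered blow-up in a homogeneous (non-abelian, possibly non-Carnot) group: one must verify that balls $\B(x,r)$ centered \emph{on} $\Sigma$ rescale, under the intrinsic dilations adapted to $x$, to $\B(0,1)$ in such a way that $\frac{1}{r^\NN}\cS^\NN\big(\Sigma\cap\B(x,r)\big)\to \cH^\NN_{|\cdot|}(\T_x\cap\B(0,1))\cdot(\text{reference density})$, uniformly enough to feed the differentiation theorem. This is essentially available from the blow-up analysis in \cite{CorMag23pr} for case (1) and from the $(\G,\M)$-regular structure theory for case (2), but stitching it to the centered-measure density theorem — in particular handling the degenerate set of lower-degree points (negligible by the area formula) and the measurability of $x\mapsto\T_x$ — requires care. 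A secondary, more technical point is the convexity argument in case (2): one needs that $\B(0,1)$ convex implies the slice-measure maximum is at the center for \emph{every} linear subspace $V$ that can arise as a tangent $\T_x$ of a $(\G,\M)$-regular set, which follows from the elementary inclusion above together with measure-preservation of the reflection $z\mapsto -z$ (valid since homogeneous distances are symmetric, so $\B(0,1)=-\B(0,1)$), but should be stated cleanly as a lemma.
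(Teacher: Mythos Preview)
Your overall strategy coincides with the paper's: introduce the reference measure $\mu=\Phi_\#(J\Phi\,\cH^n_{|\cdot|})$, compute the centered upper density $\Theta^{*\NN}(\mu,x)=\cH^n_{|\cdot|}(\T_x\cap\B(0,1))$ via the blow-up of \cite{CorMag23pr}, feed this into the differentiation theorem of \cite{FSSC15} to get an area formula for $\cC^\NN$, and compare with \eqref{intro:areageneral} once you know $\beta_d(\T_x)=\cH^n_{|\cdot|}(\T_x\cap\B(0,1))$. For case~(1) your appeal to Theorem~\ref{T1} is exactly what the paper does in Theorem~\ref{teo:eqSC}.

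The gap is in case~(2). Your Brunn--midpoint argument treats $\B(z,1)$ as the Euclidean translate $z+\B(0,1)$, but in a non-abelian homogeneous group $\B(z,1)=z\cdot\B(0,1)$ is a \emph{group} translate, a polynomial (not affine) image of $\B(0,1)$. The inclusion $V\cap\B(0,1)\supseteq\tfrac12(V\cap\B(z,1))+\tfrac12(V\cap\B(-z,1))$ therefore fails: from $a\in\B(z,1)$ one only gets $z^{-1}a\in\B(0,1)$, not $a-z\in\B(0,1)$, so convexity of $\B(0,1)$ says nothing about $\tfrac12(a+b)$. Likewise the reflection $x\mapsto -x=x^{-1}$ sends $z\B(0,1)$ to the \emph{right} translate $\B(0,1)z^{-1}$, not to $\B(z^{-1},1)=z^{-1}\B(0,1)$, so the two slices need not have equal measure. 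The paper's route (Theorem~\ref{th:CSSQ-P}) rests on a structural fact you did not use: for a $(\G,\M)$-regular set each $\T_x$ is the kernel of an h-differential, hence a \emph{normal} subgroup. Theorem~\ref{teo:ballconv} (via Lemmas~\ref{L1} and~\ref{L2}) then gives $\beta_d(\T_x)=\cH^{\q-\p}_{|\cdot|}(\B(0,1)\cap\T_x)$; the point is that for a normal subgroup $\W$ with complementary homogeneous subspace $V$ one has $v+\W=v\W$ and $\cH^n_{|\cdot|}$ on $\W$ is invariant under group left translations, which restores just enough Euclidean behaviour to run a convexity argument as in \cite{Mag22RS}. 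Your proof for case~(2) needs this normality ingredient; the bare symmetrization does not survive noncommutativity.

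Two small notes: the slice measure is $\cH^n_{|\cdot|}$ with $n$ the topological dimension of $\T_x$, not $\cH^\NN_{|\cdot|}$; and the worry about lower-degree points is irrelevant here, since that issue concerns smooth submanifolds (Theorem~\ref{t:area}), not intrinsic graphs.
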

It is worth to mention that in the previous assumptions the spherical factor is not required to be constant. 
When $\G$ is an Heisenberg group, the previous result includes \cite[Theorem~4.2]{CorMag23} as a special instance.
The condition (1) of Theorem~\ref{teo:C=S} follows from Theorem~\ref{teo:eqSC}
whereas the condition (2) is a consequence of Theorem~\ref{th:CSSQ-P}.
It is a little bit surprising that multiradial distances, without any convexity assumption, 
satisfy the same symmetry condition \eqref{eq:betamultiradial}
of Theorem~\ref{teo:ballconv}, where the metric unit ball of the given homogeneous distance is a convex set.

Finding homogeneous distances that allow for a constant spherical factor is not an easy task in general homogeneous groups. The question is strictly related to the metric and the algebraic structure of the group. For instance, other types of symmetric homogeneous distances are possible, like {\em vertically symmetric distances}, \cite{Mag31,Mag22RS,CorMag23}.
These results confirm that further study is necessary to understand the geometric properties of symmetric distances in homogeneous groups.

\section{Preliminaries and basic facts}\label{sect:prelim}

\subsection{Homogeneous groups and some geometric measures} 
The present section is devoted to the basic notions that will be
used throughout. 
A \textit{graded group} $\G$ of step $\iota$ is a connected, simply connected and nilpotent Lie group, whose Lie algebra is graded of step $\iota$, namely there exists a sequence of subspaces $\cH_j$ with $j\in\N$, such that $\cH_j= \{ 0 \}$ if $j>\iota$, $[\cH_i, \cH_j] \subseteq \cH_{i+j}$
for every $i,j\ge1$, $\cH_\iota \neq \{ 0 \}$ and
$ \mathrm{Lie}(\G)=\cH_1 \oplus \dots \oplus \cH_\iota$, where 
$$[\cH_i,\cH_j]=\text{span} \{ [X,Y]  :  X \in \cH_i, \ Y \in \cH_j\}.$$  
If $[\cH_1, \cH_i] = \cH_{i+1}$ for every $i=1, \dots, \iota-1$, we say that $\G$ is a \textit{stratified group}.

The exponential map $\mathrm{exp}: \mathrm{Lie}(\G) \to \G$ is a global diffeomorphism, hence we are allowed to identify in a standard way $\G$ with $\mathrm{Lie}(\G)$, namely we model a graded group $\G$ as a graded vector space 
\begin{equation}\label{eq:dirsumG}
H_1 \oplus H_2 \oplus \dots \oplus H_\iota,
\end{equation}
endowed with both a Lie group and a Lie algebra structure. 

The group operation on $\G$ is given by the well known Baker--Campbell--Hausdorff formula, in short {\em BCH formula}, see for instance \cite[Section~2.15]{Var84Lie}.
The {\em left translation by an element $x\in\G$} is the analytic diffeomorphism $l_x:\G \to \G$, $l_x(y)=xy$ for every $y \in \G$.
The linear projection with respect to the direct sum \eqref{eq:dirsumG}
is  
\begin{equation}\label{eq:PHj}
P_{H_j} : \G \to H_j, \qquad j=1, \ldots, \iota
\end{equation}
A {\em homogeneous group} is a graded Lie group equipped with 
a one-parameter group of ``dilations'' $\set{\delta_r:r>0}$,
that read on $\Lie(\G)$ have eigenvalues $r^i$ on $H_i$, $i=1,\ldots,\iota$.  We equip a graded group $\G$ with a {\em homogeneous distance}, i.e.\ a distance $d$ on $\G$ such that for every $x,y,z \in \G$ and $r>0$, the conditions 
\[
d(zx,zy)=d(x,y)\quad \text{and}\quad d(\delta_rx, \delta_ry)=r d(x,y)
\]
hold for all $z,x,y\in\G$ and $r>0$. 
We also introduce the associated {\em homogeneous norm} $\| x \|=d(x,0)$ for every $x \in \G$.

A {\em homogeneous subspace} is a linear subspace $V$ of $\G$, which is closed under the action of dilations $\delta_r$. If $V$ is also a subgroup, then we call it a \textit{homogeneous subgroup}.
It can be easily checked that the Hausdorff dimension of $\G$ with respect to $d$ is given by the formula $Q=\sum_{j=1}^{\iota} j \ \mathrm{dim}(H_j)$. Since all homogeneous distances are equivalent to each other, the Hausdorff dimension of $\G$ is independent of the
fixed homogeneous distance. We denote by $\q$ the topological dimension of the homogeneous group $\G$.

Throughout the paper, we assume that $\G$ is equipped with a scalar product $ \langle \cdot, \cdot \rangle$ and we denote by $|\cdot|$ its associated norm. Moreover, we assume that the layers $H_1, \ldots, H_{\iota}$ are orthogonal with respect to $ \langle \cdot, \cdot \rangle$. The linear structure of $\G$ gives a canonical isomorphism between $\G$ and $T_0\G$. Hence, the scalar product $\langle\cdot,\cdot\rangle$ automatically extends to a left invariant Riemannian metric $g$ on $\G$. We denote the norm generated by the inner product on the tangent space $T_x\G$ by $|\cdot|_g$, with $x \in \G$. 

For every $k \in \N$, $1\le k\le \q$, we consider the space $\Lambda_k \G$ of $k$-vectors. The fixed scalar product $\langle \cdot, \cdot \rangle$ naturally extends to a scalar product on $\Lambda_k \G$, so that we have a Hilbert space structure on $\Lambda_k\G$, where the associated norm is still denoted by $| \cdot |$. 
If $V \subset \G$ is a $k$-dimensional subspace, an
\textit{orienting $k$-vector $ \textbf{V} \in \Lambda_k \G\sm\set{0}$  of $V$} is a simple $k$-vector such that $V=\{ v \in \G : \textbf{V} \wedge v=0 \}$.
For $x \in \G$ and $r>0$, it is useful to introduce the following metric balls
$$
\B(x,r)= \{ x \in \G: d(x,0) \leq r \} \qquad B_E(x,r)= \{ x \in \G: |x| < r \}.
$$
In the sequel $\G$ is assumed to be a homogeneous group, if not otherwise stated.

The homogeneous distance of $\G$ gives rise to a natural way to measure subsets with an associated dimension.  
Let $\mathcal{F} \subset \mathcal{P}(\G) $ be a nonempty family of closed subsets of $\G$ and let $\zeta: \mathcal{F} \to [0,+\infty]$ be any function, that is the fixed {\em gauge}. For $\delta>0$, $A \subset \G$, we define
\begin{equation}\label{def:carathdeltameas}
	\phi_{\delta, \zeta}(A)= \inf \set{ \sum_{j=0}^{\infty} \zeta(B_j) \ : \ A \subset \bigcup_{j=0}^\infty B_j , \ \mathrm{diam}(B_j) \leq \delta, \ B_j \in \mathcal{F} }.
\end{equation}
Considering $\phi_\zeta(A)=\sup_{\delta>0}\phi_{\delta,\zeta}(A)$, we have introduced a Borel regular measure $\phi_\zeta$ on the metric space $\G$. 
Given $\alpha \in [0, \infty)$, we set the gauge
\begin{equation}\label{eq:zeta}
\zeta_\alpha(S)=(\diam(S)/2)^{\alpha}
\end{equation}
for every $S\subset \G$. 
If $\mathcal{F}$ coincides with the family $\cF_b$ of closed balls with positive radius and we 
consider $\zeta=\zeta_\alpha|_{\cF_b}$, then the resulting measure $\phi_{\zeta_\alpha}$ is called the \textit{$\alpha$-dimensional spherical measure} and we denote it by $\cS^\alpha$.\\

Following \cite{Edg2007}, or \cite{SaintRayTri88},
we define the \emph{$\alpha$-dimensional centered Hausdorff measure} $\mathcal{C}^{\alpha}$ of a set $A \subset \G$ as 
$$ \mathcal{C}^{\alpha}(A)= \sup_{E \subset A} \mathcal{D}^{\alpha}(E)$$ where
$\mathcal{D}^{\alpha}(E)= \lim_{\delta \to 0+} \mathcal{D}^{\alpha}_{ \delta}(E)$, and for every $\delta \in (0, \infty)$ we have set
$$
\mathcal{D}^{\alpha}_{ \delta}(E)= \inf \  \left\{ \  \sum_{j=0}^{\infty} \zeta_{\alpha}(\B(x_j, r_j)) : E \subset \bigcup_{j=0}^{\infty} \mathbb{B}(x_j, r_j), \ x_j \in E, \ \text{diam}(\mathbb{B}(x_j, r_j)) \leq \delta  \right\}.
$$

Let us consider in \eqref{def:carathdeltameas} the case where $\mathcal{F}$ is the family $\cF_c$ of closed subsets of $\G$. Then we fix $k \in \{1, \dots, q \}$ and define the geometric constant
$\omega_k=\cL^k(\set{x\in\R^k: |x|_{\R^k}\le 1})$,
where $|\cdot|_{\R^k}$ is the Euclidean norm of $\R^k$. 
Considering $\zeta=\zeta_k|_{\cF_c}$, where now the diameter of \eqref{eq:zeta} is considered with respect to the norm $| \cdot|_{\R^k}$ associated with the fixed scalar product $\langle \cdot, \cdot \rangle$ on $\G$, then $\omega_k\phi_{\zeta_k}$ becomes the well known 
$k$-dimensional Hausdorff measure with respect to the Euclidean distance, denoted by $\mathcal{H}^k_{|\cdot|}$.


\subsection{An auxiliary result}
We present two lemmas which may have an independent interest
and which immediately give 
Theorem~\ref{teo:ballconv}. This is the central tool of Section~\ref{sect:spherical_center}. 
The two lemmas concern translations of normal subgroups.

We need first to emphasize the notion of group projection.
Let us fix a homogeneous group $\G$ and choose two homogeneous
subgroups $\W$ and $\V$ with the properties
\begin{equation}\label{def:complemSG}
\W \cap \V = \{0 \}\quad \text{and} \quad \G= \W   \V.
\end{equation}
We call $(\W,\V)$ a \textit{couple of complementary subgroups}. 
With our identification, it is also true that $\W$ and $\V$ are subalgebras of $\G$ such that $ \W \oplus \V=\G$. 
Due to \eqref{def:complemSG}, the {\em group projections}
\beq\label{eq:proj}
\pi_{\W}: \G \to \W, \ \pi_{\W}(wv)=w, \ \pi_{\V}: \G \to \V, \ \pi_{\V}(wv)=v
\eeq
are well defined for every $w\in\W$ and $v\in\V$.

The next lemmas follow from \cite[Lemma~3.1.20, 3.1.21]{CorniPhD} and \cite[Proposition~3.1.22]{CorniPhD}.
Their proof is based on the Baker--Campbell--Hausdorff formula, and  taking into account the grading of the Lie algebra, arguing as in \cite[Lemma~3.3, 3.4, 3.5]{Mag22RS}.
An interesting aspect is that these lemmas do not necessarily require a factorization of $\G$ 
by a couple of complementary subgroups.

\begin{lem}\label{L1}
Let $V,\W \subset \G$ be homogeneous subspaces of a homogeneous
group $\G$, where $\W$ is also a normal subgroup and $\G=V \oplus \W$. 
Then the mapping 
\[
F: V \times \W \to \G,\quad  F(v,w)=vw
\]
is an invertible polynomial function with polynomial inverse 
$T:\G \to V \times \W$. Thus, the group projections $\pi_V$, $\pi_\W$  are defined 
by the formula $T(x)=(\pi_V(x), \pi_{\W}(x))$
for every $x\in\G$. Moreover, the group projection $\pi_V$ is also a linear projection with respect 
to the direct sum $V\oplus\W$. 
\end{lem}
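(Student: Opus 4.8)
\textbf{Proof plan for Lemma~\ref{L1}.}
The plan is to exploit the graded structure of $\Lie(\G)$ together with the BCH formula, following the scheme of \cite[Lemma~3.3, 3.4, 3.5]{Mag22RS}. First I would observe that since $\W$ is a normal subgroup and $\G=V\oplus\W$ as graded vector spaces, every element $x\in\G$ decomposes uniquely as $x=v+w$ with $v\in V$, $w\in\W$. The map $F(v,w)=vw$ is then, by the BCH formula, of the form $F(v,w)=v+w+\sum_{k\ge 2}c_k(v,w)$, where each $c_k$ is a universal Lie polynomial built from iterated brackets; each such bracket involves at least one factor and, because the grading forces bracket terms into strictly higher layers, $F$ is a polynomial map that is ``upper triangular'' with respect to the layer decomposition, with identity on the leading part $v+w$. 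Hence $F$ is a polynomial bijection of $V\times\W$ onto $\G$ whose inverse $T$ is, by a layer-by-layer recursive solving argument (or by the standard fact that a polynomial diffeomorphism of this triangular type has polynomial inverse), again a polynomial map. This defines $\pi_V$ and $\pi_\W$ by $T(x)=(\pi_V(x),\pi_\W(x))$.

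The key remaining point is that $\pi_V$ is actually \emph{linear}. Here I would use normality of $\W$ in an essential way: since $\W\trianglelefteq\G$, the quotient $\G/\W$ is a group, and the projection $\G\to\G/\W$ is a group homomorphism; identifying $\G/\W$ with $V$ as a vector space (via the section $v\mapsto v\W$), the map $\pi_V$ coincides with this homomorphism followed by the linear identification, hence it is a Lie algebra homomorphism, in particular linear. Concretely: for $x=vw$ one has $x\W=v\W$, so $\pi_V(x)$ depends only on the coset $x\W$; writing $x$ in coordinates and using that the $\W$-component of the BCH product only shifts things within $\W$ (by normality, $[V,\W]\subseteq\W$ and $[\W,\W]\subseteq\W$), the $V$-part of $x$ in linear coordinates is unaffected by the nonlinear BCH corrections, so $\pi_V(x)=P_V(x)$, the linear projection onto $V$ along $\W$. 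I would verify this by writing out the BCH correction terms and checking that all of them land in $\W$.

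The main obstacle I anticipate is making the ``layer-by-layer triangularity'' and the polynomial invertibility fully rigorous without drowning in BCH bookkeeping: one needs to argue carefully that solving $F(v,w)=x$ for $(v,w)$ can be done layer by layer (lowest layer first, where $F$ is the identity, then each successive layer expresses the new unknowns as polynomials in $x$ and the already-determined lower-layer unknowns), and that this produces genuinely polynomial — not merely rational or analytic — formulas. This is where the nilpotency (finitely many layers, so the recursion terminates) and the strict degree increase of brackets are both used. Since the statement explicitly says the result follows from \cite[Lemma~3.1.20, 3.1.21]{CorniPhD} and \cite[Proposition~3.1.22]{CorniPhD}, I would cite those for the technical polynomial-inverse argument and present only the normality-to-linearity reduction in detail.
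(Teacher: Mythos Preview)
Your proposal is correct and follows exactly the approach the paper indicates: the paper does not give a self-contained proof but states that the lemma follows from \cite[Lemma~3.1.20, 3.1.21, Proposition~3.1.22]{CorniPhD}, arguing via the BCH formula and the grading as in \cite[Lemmas~3.3--3.5]{Mag22RS}, which is precisely the triangular layer-by-layer scheme you outline. Your normality-to-linearity argument for $\pi_V$ (all BCH correction terms of $vw$ land in the ideal $\W$, so $P_V(vw)=v$) is the intended mechanism.
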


\begin{lem}\label{L2}
If $V,\W \subset \G$ are homogeneous linear subspaces such that $\G=V \oplus \W$
and $\W$ is a normal subgroup. Then for every $v \in V$ and $x\in\G$, we have
$ v+\W=v\W$  and  $ \mathcal{H}^n_{|\cdot|}(B)=  \mathcal{H}^n_{|\cdot|}(l_x(B))
 $ for every measurable set $B \subset \W$.
\end{lem}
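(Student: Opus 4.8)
\textbf{Proof plan for Lemma~\ref{L2}.}
The plan is to first establish the algebraic identity $v+\W=v\W$ for $v\in V$, and then use this to deduce the invariance of Euclidean Hausdorff measure under left translations restricted to $\W$. For the first part, I would argue as follows. Fix $v\in V$ and $w\in\W$. By the BCH formula, $vw = v+w+\frac12[v,w]+\cdots$, where every iterated bracket lands in a layer $H_j$ with $j\ge2$. The key point is that since $\W$ is a normal subgroup, the subalgebra $\W$ is an ideal of $\Lie(\G)$, so $[v,w']\in\W$ for every $w'\in\W$; iterating, every nonzero term in the BCH series $vw$ beyond $v+w$ is an iterated bracket that contains at least one factor from $\W$, hence lies in $\W$. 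Therefore $vw = v + w''$ for some $w''\in\W$, which shows $v\W\subseteq v+\W$. Conversely, the same computation applied to $v^{-1}=-v$ shows $(-v)(v+w) \in -v + (v+\W) = \W$ after using that $v+w \mapsto (-v)(v+w)$ again only adds elements of $\W$; hence $v+w\in v\W$, giving $v+\W\subseteq v\W$. So $v+\W = v\W$ as sets.

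For the measure invariance, it suffices to treat $x\in\G$ arbitrary and $B\subset\W$ measurable, and show $\cH^n_{|\cdot|}(l_x(B)) = \cH^n_{|\cdot|}(B)$. Write $x = v_0 w_0$ with $v_0 = \pi_V(x)\in V$ and $w_0=\pi_\W(x)\in\W$ (using Lemma~\ref{L1}, or just the direct sum decomposition). Since $\cH^n_{|\cdot|}$ is the Euclidean Hausdorff measure built from the fixed scalar product, it is translation invariant in the \emph{linear} sense: $\cH^n_{|\cdot|}(z+B)=\cH^n_{|\cdot|}(B)$ for any $z$ and any $B$. The strategy is to show that $l_x$ restricted to $\W$ acts as an affine map of the affine subspace $w_0+\W = \W$ (after a linear translation), whose linear part is an isometry of $\W$ with respect to $|\cdot|$. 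Concretely, for $w\in B\subset\W$, the BCH formula gives
\[
xw = v_0 w_0 w = v_0\,(w_0 w).
\]
Now $w_0 w\in\W$ since $\W$ is a subgroup, and by the first part $v_0(w_0 w) = v_0 + w_0 w + (\text{higher terms in }\W)$. The map $w\mapsto w_0 w$ on $\W$ is left translation inside the group $\W$; and the full map $w\mapsto \pi_\W(xw) = w_0 w + (\text{corrections})$ must be analyzed. The cleanest route is: $l_x(\W\text{-part})$ is obtained by composing the linear translation by $v_0$ (which maps $\W$ to the affine slice $v_0+\W$ and preserves $\cH^n_{|\cdot|}$ trivially) with left translation by $w_0$ \emph{inside} $\W$; and left translation inside the nilpotent group $\W$ is a polynomial volume-preserving map for the Haar measure of $\W$, which, because $\W$ is a homogeneous subgroup of a graded group with orthogonal layers, coincides up to a constant with $\cH^n_{|\cdot|}\res\W$ — indeed the Jacobian of left translation in a nilpotent Lie group is identically $1$.

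The main obstacle I anticipate is the bookkeeping needed to show that the map $w\mapsto \pi_\W(xw)$ on $\W$ has Euclidean Jacobian identically equal to $1$, i.e.\ that the ``correction terms'' coming from BCH do not distort the Euclidean $n$-dimensional volume on $\W$. This reduces to the standard fact that in any nilpotent Lie group, left translations have Jacobian $1$ with respect to a Lebesgue measure on the (identified) Lie algebra; here one must additionally check that $\cH^n_{|\cdot|}\res\W$ is precisely such a Lebesgue measure on the linear subspace $\W\cong\R^n$, which follows because the fixed scalar product restricts to an inner product on $\W$ and $\cH^n_{|\cdot|}$ built from an inner product on an $n$-dimensional subspace is the corresponding Lebesgue measure. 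Once the Jacobian-one fact is in place, the chain $l_x|_\W = (\text{translation by }v_0)\circ(\text{left-}\W\text{-translation by }w_0)$ gives the invariance immediately. Everything here parallels \cite[Lemma~3.3, 3.4, 3.5]{Mag22RS} and \cite[Lemma~3.1.20, 3.1.21]{CorniPhD}, so the proof amounts to transcribing those arguments with the normality of $\W$ ensuring that all BCH corrections stay inside $\W$.
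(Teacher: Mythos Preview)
Your proposal is correct and follows essentially the same route the paper indicates: the identity $v+\W=v\W$ comes from BCH together with $\W$ being an ideal, and the measure invariance comes from the graded (unipotent) structure forcing the relevant Jacobian to equal $1$, exactly as in \cite[Lemma~3.3, 3.4, 3.5]{Mag22RS} and \cite[Lemma~3.1.20--3.1.22]{CorniPhD}. One small wording correction: $l_{v_0}|_{\W}$ is not literally the Euclidean translation $w'\mapsto v_0+w'$ (the BCH corrections are present), but --- as you yourself note under ``main obstacle'' --- the map $w'\mapsto v_0w'-v_0$ is lower-triangular with identity diagonal in a graded basis of $\W$, hence has Jacobian $1$, which is all that is needed.
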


As a consequence of the previous lemmas, we obtain the following special form of the 
spherical factor.

\begin{teo}\label{teo:ballconv}
	If $d$ is a homogeneous distance whose metric unit ball $\B(0,1)$ is convex and $\W \subset \G$ is an $m$-dimensional normal subgroup of $\G$, then 
\begin{equation}\label{eq:betaconvex}
\beta_d(\W)=\mathcal{H}^m_{| \cdot |}(\B(0,1) \cap \W).
\end{equation}
\end{teo}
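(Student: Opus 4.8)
The goal is to show that when $\B(0,1)$ is convex and $\W$ is an $m$-dimensional normal subgroup, the spherical factor $\beta_d(\W)$, which by Definition~\ref{def:sphericalfactor} is $\max_{z\in\B(0,1)}\cH^m_{|\cdot|}(\W\cap\B(z,1))$, is actually attained at $z=0$. The idea is to decompose the competing translation $z$ into a component along $\W$ and a transverse component, discard the $\W$-component since it only translates the slice inside $\W$ (leaving its $\cH^m_{|\cdot|}$-measure unchanged by Lemma~\ref{L2}), and then use convexity and a symmetrization argument to show the transverse shift can only shrink the slice.

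\textbf{Step 1: setup and reduction.} Fix $z\in\B(0,1)$. Pick any homogeneous complement $V$ of $\W$ so that $\G=V\oplus\W$ with $\W$ normal; Lemma~\ref{L1} applies and gives group projections $\pi_V,\pi_\W$, with $\pi_V$ linear. Write $v=\pi_V(z)$. The slice we must estimate is $\W\cap\B(z,1)=\{w\in\W: z^{-1}w\in\B(0,1)\}$. Using normality of $\W$ and Lemma~\ref{L2}, left translation by $\pi_\W(z)$ (an element of $\W$) maps $\W$ to $\W$ preserving $\cH^m_{|\cdot|}$, so we may replace $z$ by $v\in V$ without changing the measure of the slice: $\cH^m_{|\cdot|}(\W\cap\B(z,1))=\cH^m_{|\cdot|}(\W\cap\B(v,1))$. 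Thus it suffices to prove $\cH^m_{|\cdot|}(\W\cap\B(v,1))\le\cH^m_{|\cdot|}(\W\cap\B(0,1))$ for every $v\in V$ with $v\W\cap\B(0,1)\ne\emptyset$.

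\textbf{Step 2: convexity and the central slice.} Since $\B(0,1)$ is convex and, by Lemma~\ref{L2}, $v+\W=v\W$, the set $\W\cap\B(v,1)=\{w\in\W: vw\in\B(0,1)\}=-v+\big((v+\W)\cap\B(0,1)\big)$ is an affine translate (inside the linear space $\W$) of the convex slice of $\B(0,1)$ by the affine plane $v+\W$. Now invoke the symmetry $\B(0,1)=-\B(0,1)$ (true for any homogeneous distance since $d(x,0)=d(0,x)$, using left invariance): the central slice $\B(0,1)\cap\W$ is a symmetric convex body in $\W$, and the family of parallel slices $\{(tv+\W)\cap\B(0,1)\}_t$ is, by the Brunn--Minkowski inequality, such that $t\mapsto \cH^m_{|\cdot|}((tv+\W)\cap\B(0,1))^{1/m}$ is concave on the interval where it is positive and symmetric about $t=0$; hence it is maximized at $t=0$, i.e.\ at the central slice. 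Applying this at $t=1$ (rescaling $v$ if necessary so that $v\in V$ lies in the relevant range) gives $\cH^m_{|\cdot|}((v+\W)\cap\B(0,1))\le\cH^m_{|\cdot|}(\W\cap\B(0,1))$, which is exactly the desired inequality. Combining with Step~1, $\beta_d(\W)\le\cH^m_{|\cdot|}(\B(0,1)\cap\W)$; the reverse inequality is immediate by taking $z=0$, so \eqref{eq:betaconvex} follows.

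\textbf{Main obstacle.} The only delicate point is the symmetrization/Brunn--Minkowski step: one must be careful that $\cH^m_{|\cdot|}$ restricted to the linear subspace $\W$ really is $m$-dimensional Lebesgue measure on $\W$ (it is, since $\cH^m_{|\cdot|}$ is Euclidean Hausdorff measure and $\W$ is a flat $m$-plane through $0$), and that slicing $\B(0,1)$ by affine planes parallel to $\W$ produces genuinely $m$-dimensional convex sets whose measures obey the $1/m$-concavity of Brunn--Minkowski. A mild subtlety is the normalization in $z\in\B(0,1)$ versus an arbitrary $v\in V$: one checks that if $\W\cap\B(v,1)\ne\emptyset$ then $v\in\B(0,1)+\W$, hence $v$ lies in the closure of the range of $t$-values where the slice function is positive, so the concavity bound still applies at that level. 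None of this requires that $d$ come from $\ph$ as in a multiradial distance — only convexity and central symmetry of $\B(0,1)$ are used.
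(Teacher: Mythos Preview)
Your overall strategy matches the paper's (which defers to \cite[Theorem~1.4]{Mag22RS}, replacing ``vertical'' by ``normal'' via Lemmas~\ref{L1} and~\ref{L2}): factor $z$ through a transverse complement $V$, use that the cosets $v\W$ are the affine planes $v+\W$, and then exploit convexity and central symmetry of $\B(0,1)$ to see that the central slice is largest. The Brunn--Minkowski argument you sketch is exactly the right tool for the last step.

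There are, however, two set-theoretic slips where you conflate the group product with the linear sum. In Step~1, left translating $\W\cap\B(v,1)$ by $w_0=\pi_\W(z)$ yields $\W\cap\B(w_0v,1)=\W\cap\B\big(v(v^{-1}w_0v),1\big)$, which equals $\W\cap\B(z,1)$ only when $v$ and $w_0$ commute; the correct translator is $vw_0v^{-1}\in\W$ (still in $\W$ by normality, so Lemma~\ref{L2} applies and your conclusion survives). In Step~2, the displayed identity $\W\cap\B(v,1)=-v+\big((v+\W)\cap\B(0,1)\big)$ is false as a set equality: one has $w\in\B(v,1)\iff v^{-1}w\in\B(0,1)$ (not $vw$), and $v^{-1}w\neq -v+w$ in a noncommutative group. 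The correct statement is
\[
l_{v^{-1}}\big(\W\cap\B(v,1)\big)=v^{-1}\W\cap\B(0,1)=(-v+\W)\cap\B(0,1),
\]
using Lemma~\ref{L2} (applied to $v^{-1}=-v\in V$) for the second equality; then Lemma~\ref{L2} again gives the equality of $\cH^m_{|\cdot|}$-measures you need. With these two corrections in place your proof is complete and coincides with the paper's intended argument.
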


The proof of this theorem follows the same steps of \cite[Theorem 1.4]{Mag22RS},
where the vertical subgroup of \cite{Mag22RS} is replaced by a more general normal subgroup,
and the use of  \cite[Lemmas 3.3, 3.4, 3.5]{Mag22RS} is replaced by Lemmas \ref{L1} and \ref{L2}.

\section{Area of intrinsic graphs by multiradial distances}\label{sect:multiradial}

In this section, we prove that all multiradial distances are rotationally symmetric with respect to a large class
of homogeneous subspaces. We start with the key result.

\begin{teo}\label{T1}
Let $\G$ be a homogeneous group and let $d$ be a multiradial distance. Then for every $n$-dimensional homogeneous subspace $V \subset \G$ the equality
\begin{equation}\label{eq:betamultiradial}
\beta_d(V)=\mathcal{H}^n_{| \cdot |} (V \cap \B(0,1)),
\end{equation}
holds, with $1 \leq n \leq \q-1$.
\end{teo}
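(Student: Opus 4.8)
The plan is to reduce the computation of $\beta_d(V)$, which is defined as a \emph{maximum} of $\mathcal{H}^n_{|\cdot|}(V\cap\B(z,1))$ over $z\in\B(0,1)$, to the value of that Euclidean measure at $z=0$, exploiting the multiradial structure of $\B(0,1)$. First I would fix an $n$-dimensional homogeneous subspace $V$ and record that, since $V$ is homogeneous, it decomposes as $V=\bigoplus_{j=1}^{\iota}(V\cap H_j)$, so the projections $P_{H_j}$ restrict nicely to $V$ and a point $x\in V$ satisfies $x\in\B(0,1)$ iff $\varphi(|x_1|,\dots,|x_\iota|)\le 1$ with $x_j=P_{H_j}(x)$. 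The key point is that for any $z\in\B(0,1)$ and any $x\in V$, the shifted point $z^{-1}x$ (or the relevant translate inside $V$) has layer components whose norms one wants to compare with those of $x$; the multiradiality and the monotonicity of $\varphi$ in each variable should force $\mathcal{H}^n_{|\cdot|}(V\cap\B(z,1))\le\mathcal{H}^n_{|\cdot|}(V\cap\B(0,1))$, i.e.\ the maximum is attained at the origin.

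The main technical step I would carry out is a \emph{slicing/symmetrization} argument along the layers. Writing a point of $V$ in coordinates adapted to $V\cap H_1,\dots,V\cap H_\iota$, I would integrate first over the variables in $V\cap H_1$ with the remaining variables fixed, then over $V\cap H_2$, and so on. For fixed values of the higher-layer variables, the set of admissible first-layer vectors is $\{v_1\in V\cap H_1: \varphi(|v_1|,t_2,\dots,t_\iota)\le 1\}$, which is a \emph{Euclidean ball} in $V\cap H_1$ centered at $0$ (because $\varphi$ is monotone in its first argument, the constraint is $|v_1|\le\rho$ for some radius $\rho=\rho(t_2,\dots,t_\iota)$). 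A translate of $V$ by $z$ shifts the first-layer slice to a ball of the \emph{same} radius but possibly off-center, which has the same $\mathcal{H}^{n_1}$-measure; however the shift also perturbs the higher layers via the group law, and here the left-invariance lemmas (Lemma~\ref{L1} and Lemma~\ref{L2}, applied layer by layer, or directly the BCH structure) guarantee that the measures of the nested slices are preserved or decreased. Iterating over all $\iota$ layers and using Fubini yields $\mathcal{H}^n_{|\cdot|}(V\cap\B(z,1))\le\mathcal{H}^n_{|\cdot|}(V\cap\B(0,1))$ for every $z$, and taking $z=0$ shows the bound is sharp, giving \eqref{eq:betamultiradial}.

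The hard part will be controlling the interaction between the translation by $z$ and the \emph{higher-degree} layer components: the group operation $z^{-1}x$ mixes the components of $x$ from lower layers into the higher-layer components of the product in a nonlinear (polynomial) way, so one cannot naively say "each layer is independently a centered ball." The right way around this, which I would make precise, is to fix the top layer first and peel downward, or to argue by induction on the step $\iota$: after quotienting by the (normal) subgroup generated by the top layer $H_\iota$, the induced distance on the quotient is again multiradial, and one applies the inductive hypothesis together with a Fubini decomposition over the $H_\iota$-fibers, where the fiber measure is translation-invariant by Lemma~\ref{L2}. The coercivity condition on $\varphi$ ensures all the slices are bounded (so the measures are finite) and the continuity of $\varphi$ ensures the admissible sets are closed, so the measures are well-defined throughout.
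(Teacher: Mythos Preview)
Your high-level plan---Fubini over the layers followed by a symmetrization showing that the maximum of $\mathcal{H}^n_{|\cdot|}(V\cap\B(z,1))$ is attained at $z=0$---is exactly what the paper does, but there is a genuine gap in how you set up the slicing. You propose to integrate over $V\cap H_1$ first, \emph{fixing the higher-layer variables}; however, for the shifted ball $\B(z,1)$ the constraint reads
\[
\varphi\big(|v_1-z_1|,\,|v_2-\Psi_2(v_1)|,\,\dots,\,|v_\iota-\Psi_\iota(v_1,\dots,v_{\iota-1})|\big)\le 1,
\]
where each $\Psi_i$ comes from the BCH polynomials and depends on $v_1,\dots,v_{i-1}$. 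Thus if you fix $v_2,\dots,v_\iota$ the resulting condition on $v_1$ is \emph{not} a ball condition, so your claim that the first-layer slice is ``a ball of the same radius but possibly off-center'' fails in this order. The correct order is the reverse: take $v_1$ as the outermost variable, then $v_2$, and so on, so that each inner domain is an honest Euclidean ball $B_E(\Psi_i,\rho_i)\cap V_i$ with center fixed once the outer variables are.

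The second piece you do not identify is the actual mechanism by which decentering decreases the measure; it is not merely that a translated ball has the same volume. One must split $\Psi_i=\zeta_i+w_i$ with $\zeta_i\in V_i$ and $w_i\in V_i^\perp\subset H_i$; after the change of variable $v_i'=v_i-\zeta_i$ the domain becomes $B_E(w_i,\rho_i)\cap V_i$, and since $w_i\perp V_i$ this is \emph{contained} in $B_E(0,\rho_i)\cap V_i$ (this is the convexity/symmetry lemma \cite[Theorem~6.3]{Mag22RS} invoked in the paper). Simultaneously, the Pythagorean inequality $|v_i'-w_i|\ge|v_i'|$ together with the monotonicity of each $\rho_j$ shows that the inner radii can only shrink. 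One iterates this from the top layer downward. Finally, Lemmas~\ref{L1} and~\ref{L2} concern \emph{normal} subgroups and are used for Theorem~\ref{teo:ballconv}, not here: a generic homogeneous subspace $V$ need not be normal, so those lemmas are not the right tool; the direct BCH computation you mention as an alternative is indeed what is required.
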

\begin{proof}
Let $z \in \B(0,1)$ and let $V=V_1 \oplus \dots \oplus V_{\iota}$, with $V_j \subset H_j$ for every $1 \leq j \leq \iota$, being $V$ an homogeneous subspace. The assumptions on $d$ ensure that
\begin{equation}\label{insieme}
V \cap \B(z,1)=\{ v \in V : \varphi(|P_{H_1}(z^{-1}v)|, \ldots, |P_{H_{\iota}}(z^{-1}v)|) \leq 1 \}.
\end{equation}
Let us denote $z_i=P_{H_i}(z)$ and $v_i=P_{H_i}(v)$, for $i=1, \ldots, \iota$. By the BCH formula, we have that
\begin{equation}\label{BCH}
\begin{split}
z^{-1}v=v&_1-z_1+v_2-z_2+Q_2(v_1,z_1)+v_3-z_3+Q_3(v_1, v_2, z_1, z_2)\\
&+ \ldots +v_{\iota}-z_{\iota}+Q_{\iota}(v_1, \ldots v_{\iota-1}, z_1, \ldots, z_{\iota-1})\\
=&v_1-z_1+\sum_{s=2}^{\iota}\Big( v_s-z_s+Q_s(v_1, \ldots, v_{s-1}, z_1, \ldots, z_{s-1}\Big),
\end{split}
\end{equation}
where we have underlined the fact that for $i=2, \ldots, \iota$, $Q_i: \G \times \G \to H_i$ is a suitable $i$-homogeneous function such that
$Q_i(x,y)$ depends only on the components $P_{H_j}(x), \ P_{H_j}(y)$ such that $j<i$, for every $x,y \in \G$.
Let us introduce
\begin{equation}\label{psi}
\Psi_i(v_1, \ldots, v_{i-1}):=z_i-Q_i(v_1, \ldots, v_{i-1}, z_1, \ldots, z_{i-1}) \in H_i
\end{equation}
for $i=2, \ldots, \iota$.
We notice that we have pointed out only the dependence on $(v_1, \ldots, v_{i-1})$, since $z$ is fixed.
By combining \eqref{insieme}, \eqref{BCH} and \eqref{psi} we get that
\begin{align*}
V \cap B(z,1)&=\{ v \in V : \varphi(|v_1-z_1|, |v_2-z_2+Q_2(v_1,z_1)|, \ldots  \\
 &  \ \ \ \ \ \ \ \ \ \ \ \ \ \ \ \ \ \ \ldots, |v_{\iota}-z_{\iota}+Q_{\iota}(v_1, \ldots v_{\iota-1}, z_1, \ldots, z_{\iota-1})|) < 1 \}\\
&=\{ v \in V : \varphi(|v_1-z_1|, |v_2-\Psi_2(v_1)|, \ldots, |v_{\iota}-\Psi_{\iota}(v_1, \ldots, v_{\iota-1})|) < 1 \}.
\end{align*}
By the coercivity of $\ph$, we can define
\begin{equation}\label{eq:rho_1}
\rho_1:=\sup\{ t \geq 0 : \varphi(t,0, \ldots, 0) < 1 \} \in (0, +\infty).
\end{equation}
Again from the coercivity of $\ph$, for $i=2, \ldots , \iota$, we can 
introduce the following function 
\begin{align}
\rho_i&: T_i  \to (0, \infty), \\
T_i&=\{ (v_1, \ldots, v_{i-1}) \in V_1 \times \ldots \times V_{i-1}: \varphi(|v_1|, \ldots, |v_{i-1}|, 0, \ldots, 0)<1 \} \label{eq:T_i}\\
\rho_i&(v_1, \ldots, v_{i-1}):= \sup \{ t \geq 0 : \varphi(|v_1|, \ldots, |v_{i-1}|, t, 0, \ldots, 0) < 1 \}. \label{eq:rho_i}
\end{align}
By the monotonicity properties of $\varphi$, for every $i=2, \ldots, \iota$ and every $\ell=1, \ldots, i-1$, we have that
 \begin{equation}\label{eq:telle}
 \varphi(|v_1|, \ldots, |v_{\ell}|, 0 , \ldots, 0) \leq \varphi(|v_1|, |v_2|, \ldots, |v_{i-2}|, |v_{i-1}|, 0, \ldots, 0)<1
 \end{equation}
 hence if $(v_1, \ldots, v_{i-1}) \in T_{i}$, then $(v_1, \ldots, v_{\ell-1}) \in T_{\ell}$ for every $\ell \leq i$.

Let us now notice that for every $i=2, \ldots, \iota$, for every fixed $j=1, \ldots, i-1$, if we choose $(u_1, \ldots, u_{j-1}, w_j, u_{j+1}, \ldots u_{i-1}) \in T_i $ and we consider $u_j \in V_j$ such that $|u_j| \leq |w_j|$, then by the non-decreasing monotonicity of $\varphi$ in the $j$-th variable it follows that \begin{equation}\label{eq-dadim}
(u_1, \ldots, u_{j-1}, u_j, u_{j+1}, \ldots u_{i-1}) \in T_i
\end{equation} and by combining the definition of $\rho_i$ and the non-decreasing monotonicity of $\varphi$ we have
\begin{equation}\label{mon1}
 \rho_i(u_1, \ldots, u_{j-1}, w_{j}, u_{j+1}, \ldots, u_{i-1}) \leq \rho_i(u_1, \ldots, u_{j-1}, u_{j}, u_{j+1}, \ldots, u_{i-1}). 
 \end{equation}

Let us now assume that $z=0$. We introduce the Lebesgue measure $\mathcal{L}^n$ on $V$ by choosing an orthonormal basis. Then Fubini's theorem ensures that the following equality holds
\begin{align}\label{zuguale0}
\mathcal{H}^n_{| \cdot |} &(\B(0,1) \cap V)=\mathcal{H}^n_{| \cdot |} (B(0,1) \cap V)=\int_{B_E(0,\rho_1) \cap V_1} \int_{B_E(0,\rho_2(v_1)) \cap V_2}  \int_{B_E(0,\rho_3(v_1, v_2)) \cap V_3} \ldots \notag \\
&  \ldots \int_{B_E(0,\rho_{\iota-1}(v_1, \ldots, v_{\iota-2})) \cap V_{\iota-1}} \mathcal{L}^{n_{\iota}}(B_E(0,\rho_{\iota}(v_1, \ldots, v_{\iota-1})) \cap V_{\iota}) dv_{\iota-1} \ldots dv_3 dv_2 dv_1,
\end{align}
where $n_{\iota}=\dim(V_{\iota})$ and $B_E(x,r)=\{ y \in \G: |x-y|<r\}$, for every $x \in \G$ and $r>0$. On the other side, if $z$ is not necessarily the identity element, then Fubini's theorem yields that
\begin{equation*}
\begin{split}
&\mathcal{H}^n_{| \cdot |} (\B(z,1) \cap V)=\mathcal{H}^n_{| \cdot |} (B(z,1) \cap V)=\int_{B_E(z_1,\rho_1) \cap V_1} \int_{B_E(\Psi_2(v_1),\rho_2(v_1-z_1)) \cap V_2}   \ldots \\
&  \ldots \int_{B_E(\Psi_{\iota-1}(v_1, \ldots , v_{\iota-2}),\rho_{\iota-1}(v_1-z_1, v_2-\Psi_2(v_1), \ldots, v_{\iota-2}-\Psi_{\iota-2}(v_1, \ldots, v_{\iota-3}))) \cap V_{\iota-1}} \\
&\mathcal{L}^{n_{\iota}}(B_E(\Psi_{\iota}(v_1, \ldots, v_{\iota-1}),\rho_{\iota}(v_1-z_1, \ldots, v_{\iota-1}-\Psi_{\iota-1}(v_1, \ldots, v_{\iota-2})) \cap V_{\iota}) dv_{\iota-1} \ldots dv_2 dv_1.
\end{split}
\end{equation*}
As a consequence, we apply \cite[Theorem 6.3]{Mag22RS}, exploiting the convexity and the
symmetry of the Euclidean ball, hence getting that
\begin{align} \label{int}
\mathcal{H}^n_{| \cdot |}& (\B(z,1) \cap V) \leq \int_{B_E(z_1,\rho_1) \cap V_1} \int_{B_E(\Psi_2(v_1),\rho_2(v_1-z_1)) \cap V_2}  \ldots \notag \\
&\qquad \ldots \int_{B_E(\Psi_{\iota-1}(v_1, \ldots , v_{\iota-2}),\rho_{\iota-1}(v_1-z_1, v_2-\Psi_2(v_1), \ldots, v_{\iota-2}-\Psi_{\iota-2}(v_1, \ldots, v_{\iota-3})) \cap V_{\iota-1}}  \\
&   \qquad \qquad \mathcal{L}^{n_{\iota}}(B_E(0,\rho_{\iota}(v_1-z_1, \ldots, v_{\iota-1}-\Psi_{\iota-1}(v_1, \ldots, v_{\iota-2})) \cap V_{\iota}) dv_{\iota-1} \ldots dv_2 dv_1. \notag
\end{align}
Now, for every $i=1, \ldots, \iota-1$ we consider $V_i^{\perp}$ as the orthogonal complement of $V_i$ in $H_i$, hence
$V_i \oplus V_i^{\perp}=H_i$ and we consider, for $i=1, \ldots, \iota-1$, the splitting with respect to this direct sum
$$\Psi_i(v_1, \ldots, v_{i-1})=\zeta_i+w_i,$$ with $\zeta_i \in V_i$ and $w_i \in V_i^{\perp}$. For the sake of simplicity, we do not explicitly indicate the dependencies of $\zeta_i$ and $w_i$ on $v_1, \ldots, v_{i-1}$.
Notice that, for every $i=1, \ldots, \iota-1$, for every $w \in V_i^{\perp}$ and $v \in V_i$ we have
\begin{equation}\label{mon2}
|v| \leq |v-w|.
\end{equation}
Let us now continue from \eqref{int} and let us perform the change of variable $v_{\iota-1}'=v_{\iota-1}-\zeta_{\iota-1}$, getting
\begin{equation}\label{int2}
\begin{split}
&\mathcal{H}^n_{| \cdot |} (\B(z,1) \cap V) \leq \int_{B_E(z_1,\rho_1) \cap V_1} \int_{B_E(\Psi_2(v_1),\rho_2(v_1-z_1)) \cap V_2} 
 \ldots \\
&\ldots \int_{B_E(w_{\iota-1},\rho_{\iota-1}(v_1-z_1, v_2-\Psi_2(v_1), \ldots, v_{\iota-2}-\Psi_{\iota-2}(v_1, \ldots, v_{\iota-3})) \cap (V_{\iota-1}-\zeta_{\iota-1})} \\
& \qquad \qquad \qquad \mathcal{L}^{n_{\iota}}(B_E(0,\rho_{\iota}(v_1-z_1, \ldots, v_{\iota-1}'-w_{\iota-1})) \cap V_{\iota}) dv_{\iota-1}' \ldots dv_2 dv_1.
\end{split}
\end{equation}
Let us not collect three observations
\begin{itemize}
\item[(i)] $V_{\iota-1}-\zeta_{\iota-1}=V_{\iota-1}$;
\end{itemize}
By \eqref{mon2} we have $|v_{\iota-1}'-w_{\iota-1}|\geq |v_{\iota-1}'|$, then the
increasing monotonicity of $\rho_{\iota}$ with respect to each variable gives
\begin{itemize}
\item[(ii)] $ \rho_{\iota}(v_1-z_1, \ldots, v_{\iota-1}'-w_{\iota-1}) \leq \rho_{\iota}(v_1-z_1, \ldots, v_{\iota-1}') $
\item[(iii)] $B_E(w_{\iota-1},\rho_{\iota-1}(v_1-z_1,  \ldots, v_{\iota-2}-\Psi_{\iota-2}(v_1, \ldots, v_{\iota-3})) \cap V_{\iota-1} \subset B_E(0,\rho_{\iota-1}(v_1-z_1,  \ldots, v_{\iota-2}-\Psi_{\iota-2}(v_1, \ldots, v_{\iota-3})) \cap V_{\iota-1}.$
\end{itemize}
Thus, we continue from \eqref{int2} exploiting (i), (ii), and (iii) and we obtain the estimate
\begin{equation}
\begin{split}
&\mathcal{H}^n_{| \cdot |} (\B(z,1) \cap V) \leq \int_{B_E(z_1,\rho_1) \cap V_1} \int_{B_E(\Psi_2(v_1),\rho_2(v_1-z_1)) \cap V_2}   \ldots \\
& \ldots \int_{B_E(0,\rho_{\iota-1}(v_1-z_1, v_2-\Psi_2(v_1), \ldots, v_{\iota-2}-\Psi_{\iota-2}(v_1, \ldots, v_{\iota-3})) \cap V_{\iota-1}} \\
& \qquad \qquad \qquad  \mathcal{L}^{n_{\iota}}(B_E(0,\rho_{\iota}(v_1-z_1, \ldots, v_{\iota-1}')) \cap V_{\iota}) dv_{\iota-1}' \ldots dv_2 dv_1. 
\end{split}
\end{equation}
Now, we perform a second change of variable $v_{\iota-2}'=v_{\iota-2}-\zeta_{\iota-2}$ and we get
\begin{align}\label{int3}
\mathcal{H}^n_{| \cdot |}& (\B(z,1)  \cap V)  \leq \int_{B_E(z_1,\rho_1) \cap V_1} \int_{B_E(\Psi_2(v_1),\rho_2(v_1-z_1)) \cap V_2} \ldots \notag \\
& \qquad \qquad \ldots \int_{B_E(w_{\iota-2}, \rho_{\iota-2}(v_1-z_1, \ldots, v_{\iota-3}-\Psi_{\iota-3}(v_1, \ldots, v_{\iota-4})) \cap (V_{\iota-2}-\zeta_{\iota-2})}  \\
&   \int_{B_E(0,\rho_{\iota-1}(v_1-z_1, v_2-\Psi_2(v_1), \ldots, v_{\iota-2}'-w_{\iota-2})) \cap V_{\iota-1}} \notag \\
&  \qquad \qquad \mathcal{L}^{n_{\iota}}(B_E(0,\rho_{\iota}(v_1-z_1, \ldots, v_{\iota-2}'-w_{\iota-2}, v_{\iota-1}')) \cap V_{\iota}) dv_{\iota-1}' dv_{\iota-2}' \ldots dv_2 dv_1. \notag
\end{align}
Now, by adapting the observations (i), (ii) and (iii) to the index $\iota-2$, we can estimate \eqref{int3} as 
\begin{equation}\label{int4}
\begin{split}
&\mathcal{H}^n_{| \cdot |} (\B(z,1) \cap V) \leq \int_{B_E(z_1,\rho_1) \cap V_1} \int_{B_E(\Psi_2(v_1),\rho_2(v_1-z_1)) \cap V_2} \ldots \\
& \ldots \int_{B_E(0, \rho_{\iota-2}(v_1-z_1, \ldots, v_{\iota-3}-\Psi_{\iota-3}(v_1, \ldots, v_{\iota-4})) \cap V_{\iota-2}} \int_{B_E(0,\rho_{\iota-1}(v_1-z_1, v_2-\Psi_2(v_1), \ldots, v_{\iota-2}')) \cap V_{\iota-1}} \\
& \qquad \qquad \qquad  \mathcal{L}^{n_{\iota}}(B_E(0,\rho_{\iota}(v_1-z_1, \ldots, v_{\iota-2}', v_{\iota-1}')) \cap V_{\iota}) dv_{\iota-1}' dv_{\iota-2}' \ldots dv_2 dv_1. 
\end{split}
\end{equation}
We can go on iterating this procedure by considering the change of variable $v_i'=v_i-\zeta_i$ for $i=\iota-3, \ldots, 1$ and repeating considerations analogous to (i), (ii) and (iii) for the corresponding index, up to getting the estimate
getting 
\begin{align}\label{int7}
&\mathcal{H}^n_{| \cdot |} (\B(z,1) \cap V) \leq \int_{B_E(0,\rho_1) \cap V_1 } \int_{B_E(0,\rho_2(v_1')) \cap V_2}  \ldots \\
\int_{B_E(0,\rho_{\iota-1}(v_1', v_2', \ldots, v_{\iota-2}')) \cap V_{\iota-1}}& \mathcal{L}^{n_{\iota}}(B_E(0,\rho_{\iota}(v_1', v_2', \ldots, v_{\iota-2}', v_{\iota-1}')) \cap V_{\iota}) dv_{\iota-1}' dv_{\iota-2}' \ldots dv_2' dv_1'.  \notag
\end{align}
Now, by the comparison of \eqref{int7} and \eqref{zuguale0}, we have proved that for every $z \in \B(0,1)$ the equality
$$\mathcal{H}^n_{| \cdot |} (\B(z,1) \cap V) \leq \mathcal{H}^n_{| \cdot |} (\B(0,1) \cap V)$$
holds, and this gives the thesis.
\end{proof}

Obviously in the case $n=\q$ the spherical factor is constantly
equal to the volume of the unit ball $\cH_{|\cdot|}^q(\B(0,1))$.

\begin{Remark}
In relation to the proof of Theorem~\ref{T1}, an important point is the
fact that the functions $\phi_i$, see \eqref{eq:T_i} and \eqref{eq:rho_i},
can be directly defined, without a recursive process.
It is however interesting to notice that the sets 
$$
T_i=\{ (v_1, \ldots, v_{i-1}) \in V_1 \times \ldots \times V_{i-1}: \varphi(|v_1|, \ldots, |v_{i-1}|, 0, \ldots, 0)<1 \}
$$
defined in \eqref{eq:T_i} for $i=2,\ldots,\iota$, can be also written using 
$\rho_1$ and $\rho_{i-1}:T_{i-1}\to(0,+\infty)$ for $i=3,\ldots,\iota$.
Following the notation in the proof of Theorem~\ref{T1},
we have
\begin{equation}
T_2=\{v_1\in V_1: |v_1|<\rho_1\}
\end{equation}
and for $i=3,\ldots,\iota$ the equality
\begin{equation*}
 T_i=\{ (v_1, \ldots, v_{i-1}) \in V_1 \times \ldots \times V_{i-1} :  |v_{i-1}| < \rho_{i-1}(v_1, \ldots, v_{i-2}), \ (v_1, \ldots, v_{i-2}) \in T_{i-1} \}
 \end{equation*}
holds as well. 
Indeed, if $v_1\in T_2$, then $\ph(|v_1|,0,\ldots,0)<1$, hence 
\eqref{eq:rho_1} and the continuity of $\ph$ yield $|v_1|<\rho_1$. 
Conversely, if $|v_1|<\rho_1$, again \eqref{eq:rho_1} yields $t_0\ge0$
such that $|v_1|<t_0$ and $\ph(t_0,0,\ldots,0)<1$.
From the monotonicity of 
$\ph$, $\ph(|v_1|,0,\ldots,0)\le \ph(t_0,\ldots,0)<1$, hence
$v_1\in T_2$. Let us consider the remaining case $3\le i\le \iota$.
If $(v_1, \ldots, v_{i-1}) \in T_i$, by \eqref{eq:telle}
we notice that $(v_1, \ldots, v_{i-2}) \in T_{i-1}$, and by definition of $T_i$, we have
$$
|v_{i-1}| \in \{t \geq 0 : \varphi(|v_1|, \ldots, |v_{i-2}|,t,0 \ldots, 0)<1\}.
$$
In view of the continuity of $\ph$, we get
 $|v_{i-1}|<\rho_{i-1}(v_1, \ldots, v_{i-2}).$
If we now assume that $(v_1, \ldots, v_{i-2})\in T_{i-1}$ and $|v_{i-1}| < \rho_{i-1}(v_1, \ldots, v_{i-2})$, there exists $\tau$ such that $|v_{i-1}|<\tau< \rho_{i-1}(v_1, \ldots, v_{i-2})$ and 
 \begin{equation*}
 \varphi(|v_1|, |v_2|, \ldots, |v_{i-2}|, \tau, 0, \ldots, 0)<1.
 \end{equation*}
The same monotonicity of $\varphi$ ensures that
 $$\varphi(|v_1|, |v_2|, \ldots, |v_{i-2}|, |v_{i-1}|, 0, \ldots, 0) \leq \varphi(|v_1|, |v_2|, \ldots, |v_{i-2}|, \tau, 0, \ldots, 0)<1,$$ so that $(v_1, \ldots, v_{i-1}) \in T_i$ and this concludes the proof.
\end{Remark}

Now we introduce the class of homogeneous subspaces that makes 
multiradial distance rotationally invariant, according to
Theorem~\ref{teo:multirot}.

\begin{deff}\label{def:F_n1n2..}
Let us fix the integers $1 \leq n_1, n_2, \ldots, n_{\iota} \leq \q-1$.
We denote by $\cF_{n_1,\ldots,n_\iota}$ the family of all homogeneous subspaces $V=V_1 \oplus \ldots \oplus V_{\iota} \subset \G$ such
that $V_i \subset H_i$ and $\dim(V_i)=n_i$ for every $i=1, \ldots, \iota$. 
\end{deff}

\begin{teo}\label{teo:multirot}
	Let $\G$ be a homogeneous group of step $\iota$ and let $d$ be a multiradial distance. Then, for every $1 \leq n_1, n_2, \ldots, n_{\iota} \leq \q-1$, the distance $d$ is rotationally symmetric with respect to $\mathcal{F}_{n_1, \ldots, n_{\iota}}$. It means that the spherical factor $\beta_d$ becomes the geometric constant
	\[
	\omega(\cF_{n_1,\ldots,n_\iota})=\beta_d(V)=\mathcal{H}^n_{| \cdot |} (V \cap \B(0,1))
	\]
	with respect to all $V\in\cF_{n_1,\ldots,n_\iota}$, where
	$n=n_1+n_2+\cdots+n_\iota$. 
\end{teo}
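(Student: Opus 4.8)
The plan is to combine Theorem~\ref{T1} with a layer-by-layer rotation argument. By Theorem~\ref{T1}, for any homogeneous subspace $V\subset\G$ of dimension $n$ with $1\le n\le\q-1$ we already have $\beta_d(V)=\mathcal{H}^n_{|\cdot|}(V\cap\B(0,1))$, so the only thing left to establish is that this quantity does not depend on the particular choice of $V$ within $\cF_{n_1,\ldots,n_\iota}$. The identity $n=n_1+\cdots+n_\iota$ is then automatic, since every $V\in\cF_{n_1,\ldots,n_\iota}$ splits as $V=V_1\oplus\cdots\oplus V_\iota$ with $V_i\subset H_i$ and $\dim V_i=n_i$. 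The extremal case $n=\q$ is degenerate: then necessarily $V=\G$ for every member of the family, and $\beta_d(\G)=\mathcal{H}^\q_{|\cdot|}(\B(0,1))$ is already a constant, as noted after Theorem~\ref{T1}; I will also tacitly use that $\cF_{n_1,\ldots,n_\iota}$ is nonempty precisely when $n_i\le\dim H_i$ for each $i$, the only case in which there is anything to prove.

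Given two subspaces $V,V'\in\cF_{n_1,\ldots,n_\iota}$, I would first construct a linear map sending $V$ onto $V'$ that leaves the metric unit ball invariant. Write $V=V_1\oplus\cdots\oplus V_\iota$ and $V'=V_1'\oplus\cdots\oplus V_\iota'$ with $V_i,V_i'\subset H_i$ and $\dim V_i=\dim V_i'=n_i$. Since any two subspaces of equal dimension of the Euclidean space $(H_i,\langle\cdot,\cdot\rangle)$ are related by an orthogonal transformation, choose $R_i$ orthogonal on $H_i$ with $R_i(V_i)=V_i'$, and set $T=R_1\oplus\cdots\oplus R_\iota\colon\G\to\G$. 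Because the layers $H_1,\ldots,H_\iota$ are mutually orthogonal and each $R_i$ is orthogonal, $T$ is an orthogonal transformation of $(\G,\langle\cdot,\cdot\rangle)$; in particular it preserves the Euclidean norm $|\cdot|$ and hence the $n$-dimensional Hausdorff measure $\mathcal{H}^n_{|\cdot|}$, and it maps $V$ isometrically onto $V'$.

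Next I would check that $T(\B(0,1))=\B(0,1)$. Since $T$ preserves each layer, $P_{H_j}(Tx)=R_j(P_{H_j}(x))$ for every $x\in\G$ and every $j$, whence $|P_{H_j}(Tx)|=|P_{H_j}(x)|$. By Definition~\ref{def:multiradialdistance} membership in $\B(0,1)$ depends only on the value $\varphi(|P_{H_1}(x)|,\ldots,|P_{H_\iota}(x)|)$, so $T$ carries $\B(0,1)$ onto itself. It is worth stressing that $T$ is in general \emph{not} a Lie group automorphism and need not be a $d$-isometry; only the two facts just recorded --- orthogonality with respect to $\langle\cdot,\cdot\rangle$ and set-invariance of $\B(0,1)$ --- are used, and the latter is exactly what the weakened Definition~\ref{def:multiradialdistance} makes available, since it concerns only the profile of the unit ball and not a formula for the distance.

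Combining the previous two steps, $\mathcal{H}^n_{|\cdot|}(V'\cap\B(0,1))=\mathcal{H}^n_{|\cdot|}\bigl(T(V)\cap T(\B(0,1))\bigr)=\mathcal{H}^n_{|\cdot|}\bigl(T(V\cap\B(0,1))\bigr)=\mathcal{H}^n_{|\cdot|}(V\cap\B(0,1))$, and Theorem~\ref{T1} then gives $\beta_d(V')=\beta_d(V)$. As $V,V'\in\cF_{n_1,\ldots,n_\iota}$ were arbitrary, $\beta_d$ is constant on the family, equal to the common value $\omega(\cF_{n_1,\ldots,n_\iota})=\mathcal{H}^n_{|\cdot|}(V\cap\B(0,1))$ for any $V$ in it, which is the assertion. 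I do not anticipate a substantive obstacle: the only delicate point is the verification that a layer-wise orthogonal map preserves a multiradial ball, which is short and hinges precisely on the restated definition; everything else is bookkeeping, including keeping the degenerate cases $n=\q$ and empty families out of the way.
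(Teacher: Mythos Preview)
Your proposal is correct and follows essentially the same approach as the paper: both invoke Theorem~\ref{T1} and then build a layer-wise orthogonal map $T=R_1\oplus\cdots\oplus R_\iota$ sending one subspace to the other, checking via the multiradial description of $\B(0,1)$ that $T$ preserves the unit ball and hence the section measure. Your write-up is slightly more streamlined in verifying $T(\B(0,1))=\B(0,1)$ directly from $|P_{H_j}(Tx)|=|P_{H_j}(x)|$, but the argument is the same.
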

\begin{proof}
Let $V=V_1 \oplus \ldots \oplus V_{\iota}$ and $W=W_1 \oplus \ldots \oplus W_{\iota}$ be two homogeneous subspaces of $\cF_{n_1,\ldots,n_\iota}$, namely 
$$\dim(V_i)=\dim(W_i)=n_i$$
for every $i=1, \ldots \iota$. Let us consider Euclidean isometries
$J_i:H_i\to H_i$ such that $J_i(V_i)=W_i$ and set for every $x=\sum_{i=1}^\iota x_i$ with $x_i \in H_i$ and $i=1, \ldots, \iota$,
the new isometry
\begin{equation}\label{eq:formT}
T:\G\to\G, \qquad T\left(\sum_{i=1}^{\iota}x_i \right)=\sum_{i=1}^{\iota}J_i(x_i).
\end{equation}
Indeed the layers $H_i$'s are all orthogonal to each other.
Since $T$ is invertible and the previous conditions give $J(V)=W$, we clearly have 
\begin{equation}\label{eq1}
T(\B(0,1) \cap V)=T(\B(0,1)) \cap W.
\end{equation}
So, if we prove that $T(\B(0,1))=B(0,1)$, then our claim follows by Theorem~\ref{T1}. Since  the inverse of $T$ has the same form
\eqref{eq:formT}, it is sufficient to show $T(\B(0,1)) \subset \B(0,1)$.
Due to the definition of multiradial distance and the fact that $T$ is an isometry, we get
 \begin{align*}
 T(\B(0,1))&= T\left( \left\{ \sum_{i=1}^{\iota}x_i \in \G : x_i \in H_i\ \text{for}\ i=1, \ldots \iota \ , \varphi(|x_1|, \ldots , |x_{\iota}|)\leq 1 \right\} \right)\\
  &= \left\{ \sum_{i=1}^{\iota}J_i(x_i) \in \G : x_i \in H_i\ \text{for}\ i=1, \ldots \iota \ , \varphi(|x_1|, \ldots , |x_{\iota}|)\leq 1 \right\}\\
  &= \left\{ \sum_{i=1}^{\iota}J_i(x_i) \in \G : x_i \in H_i\ \text{for}\ i=1, \ldots \iota \ , \varphi(|J_1(x_1)|, \ldots , |J_{\iota}(x_{\iota})|)\leq 1 \right\},
 \end{align*}
where the last set is contained in $\B(0,1)$. Thus, we get
\[
\mathcal{H}^n_{| \cdot |} (V \cap \B(0,1))=
\mathcal{H}^n_{| \cdot |} (W \cap \B(0,1)),
\]
concluding the proof.
\end{proof}

\begin{proof}[Proof of Theorem~\ref{T2}]
Our assumptions allow us to apply the area formula \eqref{intro:areageneral},
where $\T_p$ is the tangent subgroup to $\Sigma$ at $p$.
By \cite[Theorem 3.2.8]{FMS14}, for every $p \in \Sigma$, we have that 
$(\T_p, \V)$ is a couple of complementary subgroups. Since $(\W,\V)$ is also a couple of complementary subgroups, by \cite[Proposition 7.2]{Mag14}, it holds that
$$\dim(\T_p \cap H_i)=\dim(\W \cap H_i)=n_i$$
for every $i=1, \ldots, \iota$ for every $p \in \Sigma$. Hence, $\T_p \in \mathcal{F}_{n_1, \ldots, n_{\iota}}$ for every $p \in \Sigma$. Therefore our claim follows by Theorem \ref{teo:multirot}.
\end{proof}

Corollary~\ref{cor:levelsetmultiradial} is a direct consequence of Theorem~\ref{T2}. 

\begin{proof}[Proof of Corollary~\ref{cor:levelsetmultiradial}]
By our assumptions, we have $f^{-1}(0)\cap\Omega'=\Sigma\cap\Omega'=\Phi(A)$,
with $\Phi(w)=w\phi(w)$, and $\phi:A\to\M$ is continuously
intrinsically differentiable, by combining
\cite[Theorem~4.3.7]{CorniPhD} and \cite[Proposition~3.12]{CorMag23pr}.
We are in the position to apply Theorem~\ref{T2} to the graph mapping
$\Phi$, hence using both \eqref{eq:SN(B)} and the formula
\beq
J\Phi(w) = |\bV \wedge \bW| \ \frac{ J_Hf(\Phi(w))}{J_{\V}f(\Phi(w))},
\eeq
that is (85) of \cite{CorMag23pr}, the proof is concluded.
\end{proof}

\section{Spherical measure and centered Hausdorff measure}\label{sect:spherical_center}

In this section, we deal with the equality between spherical measure and centered Hausdorff measure.

\begin{teo}\label{teo:eqSC}
Let $\Sigma \subset \G$ be the intrinsic graph of a mapping $\phi:A \to \V$, where $A \subset \W$ is open and $(\W,\V)$ is a couple of complementary subgroups. If $\phi$ is continuously intrinsically differentiable and $d$ is multiradial, then
\begin{equation}\label{SN=CNmulti}
\mathcal{S}^\NN \res \Sigma=\mathcal{C}^\NN\res \Sigma,
\end{equation}
where $\NN$ is the Hausdorff dimension of $\Sigma$ and both $\mathcal{S}^\NN$ and $\mathcal{C}^\NN$ are constructed by $d$.
\end{teo}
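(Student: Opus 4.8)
The plan is to deduce the identity $\mathcal{S}^\NN\res\Sigma=\mathcal{C}^\NN\res\Sigma$ from a pointwise comparison of densities, using the measure-theoretic machinery already invoked in the paper together with the explicit area formula of Theorem~\ref{T2}. Since in general $\mathcal{S}^\NN\res\Sigma\le\mathcal{C}^\NN\res\Sigma$ always holds (centered Hausdorff measure dominates spherical measure, because the centered construction restricts to balls centered on the set), the real content is the reverse inequality $\mathcal{C}^\NN\res\Sigma\le\mathcal{S}^\NN\res\Sigma$. I would first record that, by Theorem~\ref{T2} (or already by Theorem~\ref{teo:areaIntro}), the measure $\mathcal{S}^\NN\res\Sigma$ is mutually absolutely continuous with the pushforward $\Phi_\#(J\Phi\,\mathcal{H}^n_{|\cdot|}\res A)$, hence in particular it is a locally finite Borel measure with positive and finite $\NN$-density at $\mathcal{S}^\NN$-almost every point of $\Sigma$. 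The key local fact to establish is that for $\mathcal{S}^\NN$-a.e.\ $p\in\Sigma$,
\begin{equation}\label{eq:densitygoal}
\Theta^{\NN,*}_{\mathcal{C}}(\mathcal{S}^\NN\res\Sigma,p)\ :=\ \limsup_{r\to0^+}\frac{\mathcal{S}^\NN\big(\Sigma\cap\B(p,r)\big)}{(2r)^{\NN}\,/\,2^{\NN}}\ \le\ 1,
\end{equation}
i.e.\ that the upper centered density of $\mathcal{S}^\NN\res\Sigma$ with respect to the gauge $\zeta_\NN$ is bounded by one almost everywhere; a standard comparison lemma for the covering measure (see \cite{Edg2007,SaintRayTri88}, and in the group setting \cite{FSSC15}) then upgrades this to $\mathcal{C}^\NN\res\Sigma\le\mathcal{S}^\NN\res\Sigma$.

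To prove \eqref{eq:densitygoal} I would blow up at a point $p\in\Sigma$ of intrinsic differentiability of $\phi$. Using the intrinsic differentiability of $\phi$ and the continuity of $d\phi$, the rescaled sets $\delta_{1/r}(p^{-1}\Sigma)$ converge locally in the Hausdorff sense to the tangent subgroup $\T_p=\graph(d\phi_p)$, and the rescaled measures $\delta_{1/r}{}_{\#}(p^{-1}{}_\#(\mathcal{S}^\NN\res\Sigma))/r^\NN$ converge (after normalization) to a Haar measure on $\T_p$, with the density factor controlled by $J\Phi$; this is exactly the blow-up analysis underlying \cite[Theorem~1.2]{CorMag23pr} which gives the area formula in the first place. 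On the limit subgroup $\T_p$ the relevant ratio becomes
\[
\frac{\mathcal{H}^n_{|\cdot|}\big(\T_p\cap\B(z,1)\big)}{\mathcal{H}^n_{|\cdot|}\big(\T_p\cap\B(0,1)\big)},
\]
and here is where multiradiality enters decisively: by Theorem~\ref{T1} the numerator is maximized at $z=0$, so the ratio is $\le1$ for every admissible center, while the balls realizing the centered density are centered at points of $\Sigma$, which in the blow-up become points of $\T_p$. Feeding this back, the upper centered density of $\mathcal{S}^\NN\res\Sigma$ at $p$ coincides with $\sup_{z\in\T_p}\mathcal{H}^n_{|\cdot|}(\T_p\cap\B(z,1))/\mathcal{H}^n_{|\cdot|}(\T_p\cap\B(0,1))=1$, which is \eqref{eq:densitygoal}. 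Combined with the trivial inequality $\mathcal{S}^\NN\res\Sigma\le\mathcal{C}^\NN\res\Sigma$ and the density characterization of $\mathcal{C}^\NN$, equality \eqref{SN=CNmulti} follows on the set of points of intrinsic differentiability, and since its complement in $A$ is $\mathcal{H}^n_{|\cdot|}$-null (by hypothesis $\phi$ is intrinsically differentiable on all of $A$, so actually there is no exceptional set) we conclude on all of $\Sigma$.

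The main obstacle I anticipate is making the blow-up of the measure $\mathcal{S}^\NN\res\Sigma$ rigorous in a way that is uniform enough to control the \emph{centered} density: one needs not only that the rescaled measures converge to Haar measure on $\T_p$, but that the convergence of $\mathcal{S}^\NN(\Sigma\cap\B(q_r,r))/r^\NN$ is uniform over centers $q_r\in\Sigma$ with $d(p,q_r)=O(r)$, so that the supremum over centers passes to the limit correctly. This is a Vitali-type / uniform-density argument; the technical heart is precisely the interplay of the continuity of $d\phi$ (giving local uniformity of the blow-up) with the comparison estimate from Theorem~\ref{T1}. A secondary point to be careful about is that the definition of $\mathcal{C}^\NN$ uses the normalization $\zeta_\NN(\B(x,r))=(\diam\B(x,r)/2)^\NN$ rather than $r^\NN$; since $\diam\B(x,r)=2r$ for a homogeneous distance, the two agree, but this should be noted explicitly to match constants between the spherical and centered constructions.
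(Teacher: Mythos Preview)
Your overall strategy—establish a pointwise density identity and then invoke the differentiation theorem for $\mathcal{C}^\NN$ from \cite{FSSC15}, with Theorem~\ref{T1} supplying the key estimate—is exactly the paper's. The paper, however, pivots on the auxiliary measure $\mu(B)=\int_{\Phi^{-1}(B)}J\Phi\,d\mathcal{H}^n_{|\cdot|}$ rather than on $\mathcal{S}^\NN\res\Sigma$: it observes, as a simplified ``centered'' variant of the upper blow-up of \cite[Theorem~1.1]{CorMag23pr}, that $\Theta^{*\NN}(\mu,x)=\mathcal{H}^n_{|\cdot|}(\T_x\cap\B(0,1))$ for every $x\in\Sigma$; then \cite[Theorem~3.1]{FSSC15} gives $\mu=\mathcal{H}^n_{|\cdot|}(\T_{(\cdot)}\cap\B(0,1))\,\mathcal{C}^\NN\res\Sigma$. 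Comparing with the area formula \eqref{intro:areageneral}, which reads $\mu=\beta_d(\T_{(\cdot)})\,\mathcal{S}^\NN\res\Sigma$, and using Theorem~\ref{T1} to identify the two density functions, one obtains \eqref{SN=CNmulti} directly. No appeal to the general inequality $\mathcal{S}^\NN\le\mathcal{C}^\NN$ is needed, and the spherical factor is not required to be constant.

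Your execution contains a genuine mix-up that, as written, does not yield a proof. The centered upper density $\Theta^{*\NN}(\nu,p)=\limsup_{r\to0}\nu(\B(p,r))/r^\NN$ uses only balls centered at the \emph{fixed} point $p$; there is no supremum over centers. Under the blow-up at $p$, the center $p$ goes to $0\in\T_p$, so the limit you need is simply $\mathcal{H}^n_{|\cdot|}(\T_p\cap\B(0,1))/\beta_d(\T_p)$ (the denominator arising from $\mathcal{S}^\NN\res\Sigma=\mu/\beta_d(\T_{(\cdot)})$ via \eqref{intro:areageneral}), and this equals $1$ precisely by Theorem~\ref{T1}. Your displayed ratio with a free $z\in\T_p$ and a supremum over $z$ is the \emph{Federer} density picture, which is what differentiates $\mathcal{S}^\NN$, not $\mathcal{C}^\NN$. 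This conflation is also the source of the ``main obstacle'' you anticipate about uniformity over moving centers $q_r$: that difficulty simply does not arise for the centered density. Once you drop the spurious $\sup_z$, your argument collapses to the paper's two-line comparison, and the cleanest way to organise it is through $\mu$ rather than through $\mathcal{S}^\NN\res\Sigma$.
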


\begin{proof}
By slightly modifying, actually simplifying, the proof of the upper-blow achieved in \cite[Theorem 1.1]{CorMag23pr}, we get 
\begin{equation}\label{eq:centdens}
\Theta^{*\NN}(\mu,x)=\limsup_{r \to 0^+} \frac{\mu(\B(x,r))}{r^\NN}=\mathcal{H}^m_{|\cdot|}(\B(0,1) \cap \T_x),
\end{equation}
for every $x \in \Sigma$, where $\Theta^{*\NN}(\mu,x)$ 
is the {\em upper $\NN$-density} of $\mu$ at $x$, \cite[Definition~1.7]{FSSC15}, and we have defined 
\begin{equation}
\mu(B)= \int_{\Phi^{-1}(B)} J\Phi(w)\  d \mathcal{H}_{|\cdot|}^\n (w)
\end{equation}
for every Borel set $B \subset \mathbb{G}$ and the Jacobian 
$J\Phi$ is introduced in \cite[Definition~7.1]{CorMag23pr}.
The reduction of the argument to prove the ``centered blow-up'' of 
\eqref{eq:centdens} can be noticed in looking at how the set of (66) in \cite{CorMag23pr} becomes simpler in the special case $x=y$. It precisely corresponds to the preimage of 
the metric unit ball with respect to the intrinsically rescaled
graph map. This mapping is going to converge to the graph map of the
intrinsic differential, whose image is exactly the tangent group.
Thus, we apply the differentiation theorem \cite[Theorem 3.1]{FSSC15}, that combined with \eqref{eq:centdens}, gives
\begin{equation}\label{eq:areacentrata}
		\int_{\Phi^{-1}(B)} J\Phi(n)\  d \mathcal{H}_{|\cdot|}^{m} (n)= \int_B \mathcal{H}^m_{|\cdot|}(\T_x \cap \B(0,1)) \ d \mathcal{C}^{M}(x),
\end{equation}
for every Borel set $B \subset \Sigma$.
Since $d$ is multiradial, Theorem~\ref{T1} holds, therefore \eqref{eq:areacentrata} and \eqref{intro:areageneral} lead us
to the equality \eqref{SN=CNmulti}.
\end{proof}

In the next theorem, we establish the equality between the spherical measure and the centered Hausdorff measure of a $(\G,\M)$-regular set of $\G$, when the metric unit ball of the homogeneous distance is 
a convex set.

\begin{teo}\label{th:CSSQ-P}
Let $\G$ and $\M$ be two stratified groups of topological dimensions $\q$ and $\p$, and of Hausdorff dimensions $Q$ and $P$, respectively. Let $\Sigma \subset \G$ be a $(\G,\M)$-regular set of $\G$ and suppose that $d$ is a homogeneous distance whose metric unit ball $\B(0,1)$ is convex.
Then the following equality holds 
	$$\mathcal{S}^{Q-P}\res \Sigma=\mathcal{C}^{Q-P}\res \Sigma.$$
\end{teo}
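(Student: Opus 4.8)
The plan is to mimic the structure of the proof of Theorem~\ref{teo:eqSC}, but replacing the role of multiradiality (hence of Theorem~\ref{T1}) with the convexity hypothesis (hence with Theorem~\ref{teo:ballconv}). First I would recall that a $(\G,\M)$-regular set $\Sigma$ is, locally, an intrinsic graph: there is a factorization $\G=\W\V$ by complementary homogeneous subgroups with $\V$ normal and isomorphic to $\M$, an open set $A\subset\W$, and a continuously intrinsically differentiable map $\phi:A\to\V$ with $\Sigma=\Phi(A)$, $\Phi(w)=w\phi(w)$; this is exactly the structure exploited in \cite[Theorem~1.4]{CorMag23pr} and in the proof of Corollary~\ref{cor:levelsetmultiradial}. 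Since both $\cS^{Q-P}$ and $\cC^{Q-P}$ are Borel regular and the statement is local (restriction to $\Sigma$), it suffices to prove the equality on each such graph piece.

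Next I would run the same two-step scheme as in Theorem~\ref{teo:eqSC}. Step one is the \emph{centered blow-up}: exactly as in \eqref{eq:centdens}, a simplification of the upper-blow-up argument of \cite[Theorem~1.1]{CorMag23pr} (the case $x=y$ in the set (66) there, where the rescaled graph map converges to the graph map of the intrinsic differential, whose image is the tangent subgroup $\T_x$) gives
\[
\Theta^{*(Q-P)}(\mu,x)=\limsup_{r\to0^+}\frac{\mu(\B(x,r))}{r^{Q-P}}=\mathcal{H}^{\q-\p}_{|\cdot|}(\B(0,1)\cap\T_x)
\]
for every $x\in\Sigma$, where $\mu(B)=\int_{\Phi^{-1}(B)}J\Phi(w)\,d\mathcal{H}^{\q-\p}_{|\cdot|}(w)$. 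Here the key point is that $\T_x$ is a homogeneous subgroup complementary to $\V$, and since $\V$ is normal, so is $\T_x$; hence Theorem~\ref{teo:ballconv} applies and gives $\mathcal{H}^{\q-\p}_{|\cdot|}(\B(0,1)\cap\T_x)=\beta_d(\T_x)$. Step two is to feed \eqref{eq:centdens} into the differentiation theorem \cite[Theorem~3.1]{FSSC15}, obtaining
\[
\int_{\Phi^{-1}(B)}J\Phi(n)\,d\mathcal{H}^{\q-\p}_{|\cdot|}(n)=\int_B\mathcal{H}^{\q-\p}_{|\cdot|}(\T_x\cap\B(0,1))\,d\mathcal{C}^{Q-P}(x)
\]
for every Borel $B\subset\Sigma$; comparing with the spherical area formula \eqref{intro:areageneral} (valid here because $(\G,\M)$-regular sets are covered by Theorem~\ref{teo:areaIntro}, as remarked after Theorem~\ref{T2}), whose integrand is also $\beta_d(\T_x)=\mathcal{H}^{\q-\p}_{|\cdot|}(\T_x\cap\B(0,1))$, yields $\mathcal{S}^{Q-P}\res\Sigma=\mathcal{C}^{Q-P}\res\Sigma$ on the graph piece, and then globally by a covering/locality argument.

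The main obstacle is verifying that the hypotheses needed to invoke Theorem~\ref{teo:ballconv} are actually in place: namely that the tangent subgroup $\T_x$ is a \emph{normal} subgroup of $\G$, which in turn rests on $\V\cong\M$ being normal in the $(\G,\M)$-regular setting; this is where the stratified hypothesis on $\G$ and $\M$ enters (the intrinsic implicit function theorem for $C^1_h$ maps into a stratified $\M$ produces a normal complement). A secondary technical point is checking that the centered blow-up \eqref{eq:centdens} genuinely holds as a limit (not merely a $\limsup$) $\mathcal{C}^{Q-P}$-almost everywhere, so that \cite[Theorem~3.1]{FSSC15} is applicable; this follows because the lower blow-up is also controlled by the same convergence of rescaled graph maps, just as in the multiradial case. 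Everything else—Borel regularity, locality of the restriction, and the identification of the two integrands—is routine once convexity has been used, through Theorem~\ref{teo:ballconv}, to turn $\mathcal{H}^{\q-\p}_{|\cdot|}(\B(0,1)\cap\T_x)$ into the spherical factor $\beta_d(\T_x)$.
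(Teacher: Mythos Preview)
Your approach is essentially the same as the paper's, and the overall structure is sound. However, there is one genuine error in your justification that the tangent subgroup $\T_x$ is normal.

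You claim that in the $(\G,\M)$-regular setting the factorization $\G=\W\V$ has $\V$ normal, and then argue ``since $\V$ is normal, so is $\T_x$''. Both assertions are problematic. First, in the implicit function theorem for $(\G,\M)$-regular sets (as in Corollary~\ref{cor:levelsetmultiradial}), the subgroup $\V$ is merely a homogeneous subgroup on which the h-differential restricts to an isomorphism onto $\M$; it need not be normal (for instance, in the Heisenberg group with $\M=\R$, one typically takes $\V$ to be a one-dimensional horizontal subgroup, which is not normal). Second, even if $\V$ were normal, being complementary to a normal subgroup does not confer normality: in the same Heisenberg example, the center is normal but its horizontal complements are not.

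The correct reason $\T_x$ is normal---and this is exactly what the paper uses---is that $\T_x$ is the \emph{kernel} of the h-differential $D_hf(x)$, and kernels of group homomorphisms are always normal subgroups (see (36) of \cite{CorMag23pr}). Once you replace your justification with this one, the rest of your argument goes through verbatim and matches the paper's proof. Your remark about needing the centered blow-up to be a genuine limit is unnecessary: the differentiation theorem \cite[Theorem~3.1]{FSSC15} works with the upper density $\Theta^{*\NN}$, exactly as written in \eqref{eq:centdens}.
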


\begin{proof}
As in the proof of Corollary~\ref{cor:levelsetmultiradial}, 
$(\G,\M)$-regular sets of $\G$ are locally the intrinsic graphs
of maps which are continuously intrinsically differentiable.
Therefore it is not restrictive to assume that the whole
$\Sigma$ is an intrinsic graph exactly as in the 
assumptions of Theorem~\ref{teo:eqSC}.
We denote by $\Phi$ the graph mapping, whose image is $\Sigma$.
We consider the same measure $\mu$ in the proof of 
Theorem~\ref{teo:eqSC}, hence the same arguments
give 
\[
	\int_{\Phi^{-1}(B)} J\Phi(w)\  d \mathcal{H}_{|\cdot|}^{\q-\p} (w)= \int_B \mathcal{H}^{\q-\p}_{|\cdot|}(\T_x \cap \B(0,1)) \ d \mathcal{C}^{Q-P}(x).
\]
Let us notice that all the tangent subgroups $\T_x$ to $\Sigma$ are also normal subgroups, since they are kernels of h-differentials,
see (36) of \cite{CorMag23pr}.  
For this reason, we can apply Theorem~\ref{teo:ballconv} to
the area formula \eqref{intro:areageneral}, getting 
\[
	\int_{\Phi^{-1}(B)} J\Phi(w)\  d \mathcal{H}_{|\cdot|}^{\q-\p} (w)=
	\int_B \mathcal{H}^{\q-\p}_{|\cdot|}(\T_x \cap \B(0,1)) \ d \mathcal{S}^{Q-P}(x).
\]
Out claim immediately follows.
\end{proof}

\bibliography{References}
\bibliographystyle{plain}

\end{document}